\definecolor{darkblue}{RGB}{0,0,170}
\definecolor{brickred}{RGB}{200,0,0}
\newcommand{\R}{\mathbb{R}}
\newcommand{\N}{\mathbb{N}}
\newcommand{\Z}{\mathbb{Z}}
\newcommand{\eps}{\varepsilon}
\newcommand{\dist}{\mathrm{dist}}
\newcommand{\Ds}{{\left(-\lapl\right)}^s}
\newcommand{\lapl}{\Delta}
\newcommand{\1}{\mathbf{1}}
\newcommand{\cC}{{\mathcal C}}
\newcommand{\cE}{{\mathcal E}}
\newcommand{\cF}{{\mathcal F}}
\newcommand{\cH}{{\mathcal H}}
\newcommand{\cM}{{\mathcal M}}
\newcommand{\intgr}[1]{\lfloor #1\rfloor}
\newtheorem{theorem}{Theorem}[section]
\newtheorem{lemma}[theorem]{Lemma}
\newtheorem{proposition}[theorem]{Proposition}
\theoremstyle{remark}
\newtheorem{remark}[theorem]{Remark}
\theoremstyle{definition}
\newtheorem{definition}[theorem]{Definition}
\date{\today}
\author{Nicola Abatangelo}
\address{(N. Abatangelo) Dipartimento di Matematica, Alma Mater Studiorum Università di Bologna, P.zza di Porta S. Donato~5, 40126 Bologna, Italy.}
\email{nicola.abatangelo@unibo.it}
\author{Sven Jarohs}
\address{(S. Jarohs) Institut für Mathematik, Goethe-Universität Frankfurt am Main, Robert-Meyer-Str.~10, 60325 Frankfurt am Main, Germany.}
\email{jarohs@math.uni-frankfurt.de}
\thanks{\textit{MSC2020}:
\textit{Primary:}
35A23, % Inequalities applied to PDEs involving derivatives, differential and integral operators, or integrals
35G15, % Boundary value problems for linear higher-order PDEs
35P05; % General topics in linear spectral theory for PDEs
\textit{Secondary:}
47A75, % Eigenvalue problems for linear operators
49R20, % Linear operators in Hilbert spaces
49R50. % Variational methods for eigenvalues of operators
}
\thanks{\textit{Keywords}:
polarization inequality,
Pólya-Szeg\H o inequality,
first eigenfunction,
positivity preserving properties, 
Faber-Krahn inequality.
}
\title{Oscillatory phenomena for higher-order fractional Laplacians}
\begin{document}

\begin{abstract}
We collect some peculiarities of higher-order fractional Laplacians
$\Ds,\ s>1,$ with special attention to the range~$s\in(1,2)$, 
which show their oscillatory nature. These include
the failure of the polarization and Pólya-Szeg\H o inequalities and
the explicit example of a domain with sign-changing first eigenfunction.
In spite of these fluctuating behaviours, we prove how the Faber-Krahn inequality
still holds for any~$s>1$ in dimension one.
\end{abstract}

\maketitle

\section{Introduction}

%%% ======= \input{images} ===============

The fractional Laplacian operator, usually denoted by~$\Ds,\ s\in(0,1),$
is the nonlocal integral operator
\begin{align}\label{fraclapl01}
\Ds u(x)=a_{n,s}\lim_{\eps\downarrow 0}\int_{\{y\in\R^n:|y|>\eps\}}\frac{u(x)-u(x+y)}{{|y|}^{n+2s}}\;dy.
\end{align}
It is naturally associated to the stochastic analysis of some Lévy processes, 
in particular of the~$\alpha$-stable ones, since it arises as their infinitesimal generator.
For this reason, it has been used in models from mathematical finance, population dynamics, quantum mechanics, and electrostatics, among others. We refer to~\cites{av,bv,hitchhiker,garofalo} for an introduction to the basic features and a more comprehensive list of applications of the operator.
From an abstract perspective, the family~$\{\Ds\}_{s\in(0,1)}$ can be thought of as a collection of operators interpolating the identity and the Laplacian, indeed, at least for~$u\in C^\infty_c(\R^n)$,
\begin{align*}
\lim_{s\downarrow 0}\Ds u & = u, \\
\lim_{s\uparrow 1} \Ds u & = -\lapl u,
\end{align*}
see~\cite{hitchhiker}*{Proposition 4.4}. 

Here, we are mainly concerned with powers of the Laplace operator greater than~$1$, which we call \textit{higher-order fractional Laplacians}. Although the integral expression in~\eqref{fraclapl01} cannot be extended to~$s>1$ in a straightforward way, it is possible to define~$\Ds,\ s>1,$ as the pseudo-differential operator
\begin{align}\label{fraclapl-pseudo}
\cF\big[\Ds u\big](\xi) = {|\xi|}^{2s}\cF u(\xi)
\qquad \text{for }\xi\in\R^n,\ u\in C^\infty_c(\R^n),
\end{align}
where~$\cF$ denotes the Fourier transform.
For~$s\in(0,1)$, the above and~\eqref{fraclapl01} coincide by properly choosing the normalizing constant~$c_{n,s}$, see~\cite{samko}*{Chapter 5, Lemma 25.3 and Theorem 26.1}. 
With definition~\eqref{fraclapl-pseudo}, the family~$\{\Ds\}_{s>0}$ interpolates the (local) \textit{polyharmonic operators}---or also \textit{polylaplacians}---~$\Ds,\ s\in\N,$ for which we refer to the monograph~\cite{ggs}.

Boundary problems driven by a polylaplacian require several boundary conditions in order to be well-posed, and this is due to the high order of the operator. One way of prescribing boundary conditions is imposing that the solution~$u$ satisfies
\begin{align*}
u=\frac{\partial u}{\partial\nu}=\ldots=\frac{\partial^{s-1}u}{\partial\nu^{s-1}}=0
\qquad \text{at the boundary, for }s\in\N,
\end{align*}
where~$\nu$ denotes the normal unit vector to the boundary. These go under the name \textit{Dirichlet conditions} and they represent the pointwise analogue of variationally solving equations in the Sobolev space~$H^s_0$.

When coupled with Dirichlet conditions, polylaplacians present an oscillatory behaviour. This may be exemplified by the failure of the weak maximum principle
\begin{align}\label{maxprinc}
\Ds u\geq 0 \quad\text{in }\Omega,\quad u\in H^s_0(\Omega),\ s\in\N\setminus\{1\},
\qquad\centernot\Longrightarrow\qquad u\geq 0\quad\text{in }\Omega.
\end{align}
It was first remarked in 1908~\cite{hada1908} that, in dimension~$n=2$, there are annular domains for which~\eqref{maxprinc} occurs for the bilaplacian~$s=2$.
It was then conjectured that~\eqref{maxprinc} was \textit{not} the case for convex domains: this is known in the literature as the \textit{Boggio-Hadamard conjecture}. But this conjecture was proved to be false~\cite{duffin49} for some rectangular domains and, later on, several other counterexamples were built; these include for~$s,n=2$: an infinite strip~\cite{MR144069}, (most of) infinite wedges~\cite{MR325989}, the punctured disk~\cite{MR481048}, and eccentric ellipses~\cite{garabedian}. As to this one last counterexample, it is worth mentioning that, on top of the simple geometry of the domain,  it is possible to provide completely elementary counterexamples in terms of polynomials~\cite{MR1267051}: this highlights how the above mentioned oscillations are deeply written in the nature of higher-order operators and they do not arise as singular phenomena. For~$s=3$ we refer to~\cite{sweers-triharm} and for~$s=4$ to~\cite{sweers-correction} for elementary explicit counterexamples. For~$s\in\N$ even and any space dimension, the lack of maximum principles can be deduced by the analysis of the oscillations of the first eigenfunction carried out in~\cite{kkm}.

Another interesting (and extremely difficult) problem for the bilaplacian with Dirichlet conditions is the \textit{Faber-Krahn inequality}. It states the first eigenvalue of any domain is larger than the first eigenvalue of the ball with the same measure. This has been proved independently for~$n=2$ in~\cite{nadirashvili} and for~$n=2,3$ in~\cite{ashbaugh-benguria}, while it remains completely open for higher dimensions. Both~\cite{nadirashvili} and~\cite{ashbaugh-benguria} are based on the previous analysis carried out in~\cite{talenti1} which, in turn, borrows the idea of \textit{Talenti's principle}~\cite{talenti-principle}*{Theorem~1}.

As the collection~$\{\Ds\}_{s\in(1,2)}$ connects the Laplacian with the bilaplacian, one could legitimately wonder in which way this oscillatory behaviour appears in the transition from~$s=1$ to~$s=2$. Our analysis here aims at contributing to answer this question, by showing that as soon as~$s>1$ oscillatory phenomena emerge at a large extent. 

First of all, let us recall that, on the fractional and nonlocal side~$s\in(1,\infty)\setminus\N$, it was first proved in~\cite{ajs-maxprinc} that ``almost'' no disconnected domain can satisfy the weak maximum principle whenever~$s\in(2k-1,2k)$ with~$k\in\N$: the prototypical domain here can be thought as the union of two disjoint balls (mind that the disconnection of the domain is overruled by the nonlocality of the operator).
A somewhat more precise analysis in~\cite{ajs-repr}*{Theorem 1.10} studied the sign of the Green function of such two-balls domain, noticing in particular how the Green function is positive (if the balls have equal radii) for~$s\in(2k,2k+1),\ k\in\N$, and thus the above mentioned counterexample cannot extend also to this range of~$s$.
Later,~\cite{ajs-ellipse} constructed counterexamples on eccentric ellipsoids for~$s\in(1,\sqrt{3}+3/2)$, therefore covering the exponents~$s\in(2,3)$.

Another interesting property found in~\cite{musina-nazarov} deals with the Dirichlet energy\footnote{This could be equivalently introduced as the quadratic form which is naturally associated to~$\Ds$ as defined in~\eqref{fraclapl-pseudo}. For~$s=0$ it coincides with the square of the~$L^2$ norm.}
\begin{align}\label{energy-fourier}
\cE_s(u,u)=\int_{\R^n}|\xi|^{2s}\big|\cF u(\xi)\big|^2\;d\xi
\end{align}
of the Nemytskii operator~$u\mapsto|u|$: it holds
\begin{align*}
\cE_s(u,u)\leq\cE_s(|u|,|u|)
\qquad\text{for }s\in(1,3/2)
\end{align*}
where the equality holds only if~$u$ is of constant sign. This is in particular equivalent to
\begin{align*}
\cE_s(u_+,u_-)\geq 0
\qquad\text{for }s\in(1,3/2)
\end{align*}
where~$u_+,u_-$ denote respectively the positive and the negative parts of~$u$.

The purpose of the present note is to collect a series of remarks on the behaviour of higher-order fractional Laplacians. In particular, we show that:
\begin{itemize}
\item The polarization inequality for~$s\in(1,3/2)$ is reversed with respect to what happens for~$s\in(0,1]$---see Definition~\ref{def:polarization} and Theorem~\ref{polarization-inequality}; the polarization of a function is a rearrangement with respect to a hyperplane and it is a useful tool in proving symmetry of solutions to elliptic and parabolic problems: we refer to~\cite{W10} and the many references therein for a survey about this matter; among the applications of the polarization, it is worth mentioning that it can approximate several other notions of rearrangement (including the spherical decreasing one) and that, as a consequence, a number of rearrangement inequalities can be deduced from the polarization inequality, see~\cite{vS};
\item The Pólya-Szeg\H o inequality fails for~$s\in(1,\infty)$, see Paragraph~\ref{par:polyaszego-counter}: contrarily to what happens for the polarization, the Pólya-Szeg\H o inequality is not reversed, see Paragraph~\ref{par:polyaszego-ex}; this inequality compares the energy of the spherical decreasing rearrangement (or, also, Schwarz symmetrization)---see Definition~\ref{def:schwarz}--- of a function with the energy of the original function; for~$s\in(0,1)$ the Pólya-Szeg\H o inequality has been proved in~\cite{park}*{equation (14)}; recall that this inequality implies the Faber-Krahn inequality;
\item On a two-balls domain, the first eigenvalue is simple and the first eigenfunction is symmetric and positive for~$s\in(2k,2k+1),\ k\in\N$, whilst it is anti-symmetric and positive on one ball (and negative on the other) for~$s\in(2k-1,2k),\ k\in\N$, see Theorem~\ref{thm:first-twoball};
\item Again on a two-balls domain, when the balls have equal radii, maximum principles are recovered for anti-symmetric ``positive'' data for~$s\in(2k,2k+1),\ k\in\N$, and for symmetric positive data for~$s\in(2k-1,2k),\ k\in\N$, see Proposition~\ref{part-max-princ-prelim};
\item Despite the results concerning the polarization and the Pólya-Szeg\H o inequalities, the Faber-Krahn inequality still holds for any~$s\in(1,\infty)$ at least in dimension~$n=1$, see Theorems~\ref{thm:faber-krahn even} and~\ref{thm:faber-krahn odd}.
\end{itemize}

\section{Notations and recallings about higher-order fractional Laplacians}

In the following we will need some notations and known facts about the analysis of higher-order fractional Laplacians, which we recall here below for future reference.

We will denote by~$\intgr{\cdot}$ the integer part of a real number, \textit{i.e.},
\begin{align*}
\intgr{s}=\max\{m\in\Z:m<s\},\qquad s\in\R:
\end{align*}
remark that, with this definition,~$\intgr{s}=s-1$ whenever~$s\in\N$. If~$\N$ denotes the set of positive integer numbers, then~$\N_0=\N\cup\{0\}$ and~$2\N_0$,~$2\N_0+1$ denote respectively the sets of even and odd non-negative integers. Symbols~$\vee$ and~$\wedge$ will denote respectively the max and min operations between real numbers, namely
\begin{align*}
a\vee b=\max\{a,b\},\qquad a\wedge b=\min\{a,b\},\qquad\text{for }a,b\in\R.
\end{align*}
By~$\omega_n$ we mean the volume of the unit ball in~$\R^n$,
\begin{align*}
\omega_n=\frac{2\pi^{n/2}}{n\,\Gamma(n/2)}.
\end{align*}
Given a measurable~$A\subset\R^n$, we denote by~$\1_A$ its characteristic function.

The fractional Laplacian~$\Ds$ as defined in~\eqref{fraclapl-pseudo} admits the pointwise representation, see~\cite{ajs-hypersingular},
\begin{align*}%\label{Ds:def}
\Ds u(x):=\frac{\kappa_{n,s}}{2}\int_{\R^n} \frac{\delta_{\intgr{s}+1} u(x,y)}{|y|^{n+2s}} \ dy,
%\qquad x\in \R^N,
\end{align*}
%for~$m\in\N$,~$s\in(0,m)$,
where
\begin{align*}
\delta_{\intgr{s}+1} u(x,y):= \sum_{k=-\intgr{s}-1}^{\intgr{s}+1} (-1)^k { \binom{2\intgr{s}+2}{\intgr{s}+1-k}} u(x+ky) \qquad \text{for }x,y\in \R^n
\end{align*}
is a finite difference of order~$2\intgr{s}+2$, and~$c_{n,s}$ is the positive constant given by
\begin{align*}%\label{cNms:def}
\kappa_{n,s}
:=\left\{\begin{aligned}
&\frac{2^{2s}\Gamma(n/2+s)}{ \pi^{n/2}\Gamma(-s) }\Bigg(\sum_{k=1}^{\intgr{s}+1}(-1)^{k}{ \binom{2\intgr{s}+2}{\intgr{s}+1-k}} k^{2s}\Bigg)^{-1}, && s\not\in\N,\\
&\frac{2^{2s}\Gamma(n/2+s)\,s!}{2\pi^{n/2}} \Bigg(\sum_{k=2}^{s+1} (-1)^{k-s+1}{\binom{2s+2}{s+1-k}} k^{2s} \ln(k)\Bigg)^{-1},&& s\in\N.
\end{aligned}\right.
\end{align*}

Recalling the quadratic form in~\eqref{energy-fourier}, the fractional Sobolev space~$H^s(\R^n)$ is defined as
\begin{align*}
H^s(\R^n)=\big\{u\in L^2(\R^n):\cE_s(u,u)<\infty\big\}.
\end{align*}
On a bounded open domain~$\Omega\subset\R^n$ the Sobolev space associated with homogeneous boundary conditions is
\begin{align*}
\cH^s_0(\Omega)=\big\{u\in H^s(\R^n):u=0\text{ in }\R^n\setminus\overline\Omega\big\}.
\end{align*}
The bilinear form induced by~\eqref{energy-fourier} is
\begin{align*}
\cE_s(u,v)=\int_{\R^n}{|\xi|}^{2s}\,\cF u(\xi)\,\overline{\cF v(\xi)}\;d\xi
\end{align*}
and it admits also the following equivalent representations
\begin{align}\label{eq:energies}
\cE_s(u,v)=\left\lbrace\begin{aligned}
& \cE_{s-\intgr{s}}\Big({(-\lapl)}^{\intgr{s}/2}u,{(-\lapl)}^{\intgr{s}/2}v\Big) && \text{if }\intgr{s}\in2\N_0, \\ 
& \cE_{s-\intgr{s}}\Big(\nabla{(-\lapl)}^{(\intgr{s}-1)/2}u,\nabla{(-\lapl)}^{(\intgr{s}-1)/2}v\Big) && \text{if }\intgr{s}\in2\N_0+1.
\end{aligned}\right.
\end{align}
A consequence of this representation is that, if~$u,v\in H^s(\R^n)$ with~$uv\equiv 0$ in~$\R^n$, then
\begin{align}\label{disjointsupport}
\cE_s(u,v)={(-1)}^{\intgr{s}+1}\,\frac{c_{n,s}}{2}\int_{\R^n}\int_{\R^n}\frac{u(x)\,v(y)}{{|x-y|}^{n+2s}}\;dx\;dy,
\end{align}
where
\begin{align*}
c_{n,s}=\frac{2^{2s}\Gamma(n/2+s)}{\pi^{n/2}\big|\Gamma(-s)\big|}>0,
\end{align*}
see~\cite{ajs-maxprinc}*{Lemma 4.4} and~\cite{musina-nazarov}*{Theorem 2}.
Equation~\eqref{disjointsupport} should be borne in mind, as it will play a crucial role in the following.

\section{A (reversed) polarization inequality}

In the following, let $\Sigma$ be an open half-space in $\R^n$. Denote by~$\tau_\Sigma:\R^n\to\R^n$ the reflection around~$\partial \Sigma$. 
\begin{definition}\label{def:polarization}
The \textit{polarization} of a function~$u:\R^n\to\R$ is given by
\begin{align*}
u_\Sigma:\R^n\to\R,\quad u_\Sigma(x)=
\left\{\begin{aligned} 
& u(x)\vee u\big(\tau_\Sigma(x)\big) && \text{for } x\in \Sigma,\\
& u(x)\wedge u\big(\tau_\Sigma(x)\big) && \text{for } x\in \R^n\setminus \Sigma.
\end{aligned}\right.
\end{align*}
\end{definition}
Note that we may also write
\begin{equation}\label{polarization-different}
u_\Sigma(x)=\frac{u(x)+u\big(\tau_\Sigma(x)\big)}{2}
\ \left\{\begin{aligned} 
& +\frac{\big|u(x)-u\big(\tau_\Sigma(x)\big)\big|}{2} && \text{for } x\in \Sigma \\
& -\frac{\big|u(x)-u\big(\tau_\Sigma(x)\big)\big|}{2} && \text{for } x\in \R^n\setminus \Sigma,
\end{aligned}\right.
\end{equation}
and it holds
\begin{align}\label{eq:anti-remark}
u_\Sigma+u_\Sigma\circ\tau_\Sigma=u+u\circ\tau_\Sigma \quad\text{in }\R^n.
\end{align}
Recall that~$\cE_s(u,u)=\cE_s(u_\Sigma,u_\Sigma)$ for~$s=0,1$ (see, \textit{e.g.},~\cite{W10}) 
whereas, for~$s\in(0,1)$, we have
\begin{equation}\label{polarization01}
\cE_s(u_\Sigma,u_\Sigma)\leq \cE_s(u,u)
\end{equation}
where the inequality is strict if~$u_\Sigma\neq u$ (see, \textit{e.g.},~\cites{B94,vSW04,GJ19}). 
In the following, we consider~$s\in(1,3/2)$ and show that~\eqref{polarization01} is reversed, see Theorem~\ref{polarization-inequality} below.
The fact that~$u_\Sigma\in H^s(\R^n)$ whenever~$u\in H^s(\R^n)$ follows immediately from~\eqref{polarization-different} and~\cite{BM91}*{Théorème 1}.

\begin{lemma}%\label{lemma:antisymmetric 01}
Let~$s\in(0,1)$,~$\Sigma$ an open half-space in $\R^n$, and~$v\in H^s(\R^n)$ such that~$v\circ \tau_\Sigma=-v$, \emph{i.e.},~$v$ is antisymmetric with respect to~$\partial \Sigma$. Then~$v\1_\Sigma\in \cH^s_0(\Sigma)$ and
\[
\cE_s(v\1_\Sigma,v\1_\Sigma)\leq \cE_s(v,v).
\]
\end{lemma}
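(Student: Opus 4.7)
The plan is to decompose $v$ as $v = v\1_\Sigma + v\1_{\Sigma^c}$ (with $\Sigma^c := \R^n \setminus \Sigma$) and combine the Gagliardo integral representation of $\cE_s$ with the disjoint-support identity~\eqref{disjointsupport}. Set $u_1 := v\1_\Sigma$ and $u_2 := v\1_{\Sigma^c}$. The antisymmetry of $v$ gives $u_2 = -u_1 \circ \tau_\Sigma$, and the invariance of the Gagliardo kernel under the simultaneous reflection $(x,y) \mapsto (\tau_\Sigma(x), \tau_\Sigma(y))$ yields $\cE_s(u_2, u_2) = \cE_s(u_1, u_1)$. Expanding $\cE_s(v,v) = \cE_s(u_1+u_2, u_1+u_2)$ then reduces the claim $\cE_s(v\1_\Sigma, v\1_\Sigma) \le \cE_s(v,v)$ to
$$ \cE_s(u_1, u_1) + 2\,\cE_s(u_1, u_2) \ge 0. $$

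Next I will write both quantities explicitly. Using the Gagliardo representation of $\cE_s(u_1, u_1)$ (which is available since $s\in(0,1)$), splitting $\R^n \times \R^n$ into the four regions $\Sigma \times \Sigma$, $\Sigma \times \Sigma^c$, $\Sigma^c \times \Sigma$, $\Sigma^c \times \Sigma^c$, and applying the change of variable $y \mapsto \tau_\Sigma(y)$ on the mixed pieces leads to
$$ \cE_s(u_1, u_1) = \frac{c_{n,s}}{2}\int_\Sigma\!\int_\Sigma \frac{(v(x)-v(y))^2}{|x-y|^{n+2s}}\,dx\,dy + \frac{c_{n,s}}{2}\int_\Sigma\!\int_\Sigma \frac{v(x)^2 + v(y)^2}{|x-\tau_\Sigma(y)|^{n+2s}}\,dx\,dy. $$
For the cross term, since $u_1 u_2 \equiv 0$, identity~\eqref{disjointsupport} applies with $\intgr{s}=0$ (hence with sign $-1$); after the same change of variable and using $v\circ\tau_\Sigma=-v$ I get
$$ 2\,\cE_s(u_1, u_2) = c_{n,s}\int_\Sigma\!\int_\Sigma \frac{v(x)\,v(y)}{|x-\tau_\Sigma(y)|^{n+2s}}\,dx\,dy. $$

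The main obstacle is that this cross term has no definite sign in general, so pointwise positivity of the integrands is not automatic. The resolution is purely algebraic: the coefficient $v(x)^2 + v(y)^2$ coming from $\cE_s(u_1,u_1)$ combines with $2v(x)v(y)$ from $2\,\cE_s(u_1,u_2)$ to produce the perfect square $(v(x)+v(y))^2$, so
$$ \cE_s(v,v) - \cE_s(v\1_\Sigma, v\1_\Sigma) = \frac{c_{n,s}}{2}\int_\Sigma\!\int_\Sigma \frac{(v(x)-v(y))^2}{|x-y|^{n+2s}}\,dx\,dy + \frac{c_{n,s}}{2}\int_\Sigma\!\int_\Sigma \frac{(v(x)+v(y))^2}{|x-\tau_\Sigma(y)|^{n+2s}}\,dx\,dy \ge 0, $$
both integrands being manifestly nonnegative. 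Finally, to justify $v\1_\Sigma \in \cH^s_0(\Sigma)$, I will observe that applying the same splitting to $\cE_s(v,v)$ gives the identity
$$ \cE_s(v,v) = c_{n,s}\int_\Sigma\!\int_\Sigma \frac{(v(x)-v(y))^2}{|x-y|^{n+2s}}\,dx\,dy + c_{n,s}\int_\Sigma\!\int_\Sigma \frac{(v(x)+v(y))^2}{|x-\tau_\Sigma(y)|^{n+2s}}\,dx\,dy, $$
so each of these two integrals is finite; decomposing $v(x)^2+v(y)^2 = \tfrac{1}{2}[(v(x)+v(y))^2 + (v(x)-v(y))^2]$ and using $|x-\tau_\Sigma(y)| \ge |x-y|$ on $\Sigma \times \Sigma$ then bounds the second integral in the formula for $\cE_s(u_1,u_1)$ by $\cE_s(v,v)<\infty$, confirming that $v\1_\Sigma \in H^s(\R^n)$ and hence, as it vanishes on $\Sigma^c$, that it belongs to $\cH^s_0(\Sigma)$.
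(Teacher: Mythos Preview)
Your computation contains a normalization mismatch that breaks the ``perfect square'' step. If the Gagliardo representation is written as
\[
\cE_s(w,w)=A\int_{\R^n}\int_{\R^n}\frac{(w(x)-w(y))^2}{|x-y|^{n+2s}}\,dx\,dy
\]
for some constant $A>0$, then polarizing gives, for functions with $u_1u_2\equiv0$,
\[
\cE_s(u_1,u_2)=-2A\int_{\R^n}\int_{\R^n}\frac{u_1(x)\,u_2(y)}{|x-y|^{n+2s}}\,dx\,dy.
\]
You use $A=c_{n,s}/2$ when computing $\cE_s(u_1,u_1)$, but then invoke~\eqref{disjointsupport}, whose coefficient $-c_{n,s}/2$ corresponds to $A=c_{n,s}/4$. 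With any single consistent choice of $A$ one actually obtains
\[
\cE_s(v,v)-\cE_s(v\1_\Sigma,v\1_\Sigma)
=A\int_\Sigma\int_\Sigma\frac{(v(x)-v(y))^2}{|x-y|^{n+2s}}\,dx\,dy
+A\int_\Sigma\int_\Sigma\frac{v(x)^2+4v(x)v(y)+v(y)^2}{|x-\tau_\Sigma(y)|^{n+2s}}\,dx\,dy,
\]
and the second integrand is \emph{not} a perfect square (take $v(x)=1$, $v(y)=-1$: it equals $-2$). So the nonnegativity is not ``manifest''.

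The fix is short: write $v(x)^2+4v(x)v(y)+v(y)^2=\tfrac{3}{2}(v(x)+v(y))^2-\tfrac{1}{2}(v(x)-v(y))^2$ and absorb the negative piece into the first integral using $|x-\tau_\Sigma(y)|\ge|x-y|$ on $\Sigma\times\Sigma$---the same geometric inequality you already invoke for the finiteness of $\cE_s(v\1_\Sigma,v\1_\Sigma)$. This yields
\[
\cE_s(v,v)-\cE_s(v\1_\Sigma,v\1_\Sigma)\ge \frac{A}{2}\int_\Sigma\int_\Sigma\frac{(v(x)-v(y))^2}{|x-y|^{n+2s}}\,dx\,dy
+\frac{3A}{2}\int_\Sigma\int_\Sigma\frac{(v(x)+v(y))^2}{|x-\tau_\Sigma(y)|^{n+2s}}\,dx\,dy\ge0.
\]
The paper itself does not give a self-contained argument here but defers to~\cite{JW16}*{Lemma 3.2}; the computation there is essentially the corrected version above. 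Also note a minor ordering issue: you appeal to~\eqref{disjointsupport} for $\cE_s(u_1,u_2)$ before establishing $u_1,u_2\in H^s(\R^n)$; since your finiteness argument is independent, simply present it first.
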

\begin{proof}
The statement follows from the proof of~\cite{JW16}*{Lemma 3.2} (see also~\cite{J16}*{Lemma 3.2}).
\end{proof}

\begin{proposition}\label{lemma:antisymmetric}
Let~$s\in(1,3/2)$,~$\Sigma$ an open half-space in $\R^n$, and~$v\in H^s(\R^n)$ such that~$v\circ \tau_\Sigma=-v$, \emph{i.e.},~$v$ is antisymmetric w.r.t.~$\partial \Sigma$. Then~$v\1_\Sigma\in \cH^s_0(\Sigma)$.
\end{proposition}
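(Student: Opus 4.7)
The plan is to use the representation~\eqref{eq:energies} of the energy via the gradient: for~$s\in(1,3/2)$ one has~$\intgr{s}=1\in 2\N_0+1$, hence
\[
\cE_s(u,u)=\cE_{s-1}(\nabla u,\nabla u),
\]
so that establishing~$v\1_\Sigma\in H^s(\R^n)$ reduces to the combination of~$v\1_\Sigma\in H^1(\R^n)$ and~$\nabla(v\1_\Sigma)\in H^{s-1}(\R^n)^n$, with~$s-1\in(0,1/2)$.

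First I would verify that~$v\1_\Sigma\in H^1(\R^n)$ together with the product rule~$\nabla(v\1_\Sigma)=\1_\Sigma\,\nabla v$. Since~$v\in H^s(\R^n)$ and~$s>1/2$, the trace of~$v$ on~$\partial\Sigma$ is well defined; the antisymmetry~$v\circ\tau_\Sigma=-v$ combined with the equality of the one-sided traces then forces~$v|_{\partial\Sigma}=0$. Integration by parts against smooth test fields consequently produces no singular surface term, yielding the product rule as claimed.

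Next, write~$\nabla v=\nabla_\parallel v+(\partial_\nu v)\,\nu$, where~$\nu$ denotes the outer unit normal to~$\Sigma$. Differentiating~$v\circ\tau_\Sigma=-v$ in tangential directions shows that the components of~$\nabla_\parallel v$ are antisymmetric with respect to~$\partial\Sigma$, while differentiating in the~$\nu$-direction shows that~$\partial_\nu v$ is symmetric. For the antisymmetric tangential part, the preceding Lemma, applied componentwise with the exponent~$s-1\in(0,1/2)\subset(0,1)$, directly gives~$(\nabla_\parallel v)\1_\Sigma\in\cH^{s-1}_0(\Sigma)^{n-1}$.

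The expected main obstacle is the symmetric normal component~$w:=\partial_\nu v\in H^{s-1}(\R^n)$, to which the Lemma does not apply. To show~$w\1_\Sigma\in H^{s-1}(\R^n)$, I would split the Gagliardo-type double integral for~$\cE_{s-1}(w\1_\Sigma,w\1_\Sigma)$ according to whether the variables lie in~$\Sigma$ or its complement; this produces an inner piece bounded by~$\cE_{s-1}(w,w)<\infty$, and a mixed piece of the form
\[
\int_\Sigma |w(x)|^2\int_{\R^n\setminus\Sigma}\frac{dy}{|x-y|^{n+2(s-1)}}\,dx\leq C\int_\Sigma \frac{|w(x)|^2}{\dist(x,\partial\Sigma)^{2(s-1)}}\,dx.
\]
Since~$s-1<1/2$, the fractional Hardy inequality on the half-space~$\Sigma$ bounds the right-hand side by~$C\cE_{s-1}(w,w)+C\|w\|_{L^2(\R^n)}^2<\infty$; this is precisely the range in which~$\1_\Sigma$ is a pointwise multiplier on~$H^\sigma(\R^n)$. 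Putting everything together, $\nabla(v\1_\Sigma)=\1_\Sigma\nabla v\in H^{s-1}(\R^n)^n$, so~\eqref{eq:energies} gives~$v\1_\Sigma\in H^s(\R^n)$; since~$v\1_\Sigma$ vanishes outside~$\Sigma$, the conclusion~$v\1_\Sigma\in\cH^s_0(\Sigma)$ follows.
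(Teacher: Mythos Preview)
Your proposal is correct and follows the same overall architecture as the paper: first establish $v\1_\Sigma\in H^1(\R^n)$ with $\nabla(v\1_\Sigma)=\1_\Sigma\nabla v$, then show each component of $\1_\Sigma\nabla v$ lies in $H^{s-1}(\R^n)$ using that $s-1<1/2$. The difference is in economy. The paper does not split into tangential and normal parts at all; it simply observes that $(\partial_i v)|_\Sigma\in H^{s-1}(\Sigma)$ for every~$i$, and then invokes the identification $\cH^{s-1}_0(\Sigma)=H^{s-1}(\Sigma)$ for $s-1\in(0,1/2)$ from Grisvard~\cite{G11}*{Corollary 1.4.4.5}, which says precisely that extension by zero is bounded $H^{s-1}(\Sigma)\to H^{s-1}(\R^n)$ in this range. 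Your Hardy-inequality computation for the normal component is essentially a hands-on proof of this multiplier property, and it applies verbatim to \emph{all} components of~$\nabla v$---no antisymmetry is needed once $s-1<1/2$. So the detour through the preceding Lemma for the tangential part is unnecessary: you could drop the tangential/normal decomposition entirely and run your Hardy argument uniformly, recovering the paper's one-line citation in expanded form.
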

\begin{proof}
By translation and rotation invariance, we may assume that~$\Sigma=\{x_1>0\}$, so that $\tau_\Sigma(x)=\tau_\Sigma(x_1,x')=(-x_1,x')$. Note that~$v\1_\Sigma\in \cH^1_0(\Sigma)$. Next recall that by~\cite{G11}*{Corollary 1.4.4.5} we have~$\cH^{s-1}_0(\Sigma)=H^{s-1}(\Sigma)$ for~$s\in(1,3/2)$. In particular, for any~$i=1,\ldots,n$, we have~$(\partial_iv)\1_\Sigma\in H^{s-1}(\Sigma)$ and thus also~$\partial_i(v\1_\Sigma)=(\partial_iv)\1_\Sigma\in \cH^{s-1}_0(\Sigma)$.
\end{proof}

\begin{remark}
We note that the conclusion of Proposition~\ref{lemma:antisymmetric} does not hold for~$s\geq 3/2$. Indeed, for example in the case~$n=1$ and~$s=2$, if~$\Sigma=\{x_1>0\}$ and~$v\in H^2(\R)$ is odd, then in general~$v'(0)\neq 0$ and thus~$v\1_\Sigma$ cannot be an element of~$\cH^2_0(\Sigma)$. More generally, one can check~\cite{MR2376460}*{Example 3.1.(i)} or~\cite{musina-nazarov}*{Example 1}.
\end{remark}

\begin{lemma}\label{lemma:disjoint:support}
	Let~$s\in(1,2)$,~$\Sigma$ an open half-space in $\R^n$, and~$w_1,w_2\in \cH^s_0(\Sigma)$ with~$w_1,w_2\geq 0$ and~$w_1w_2\equiv 0$ in~$\R^n$. Then
	\[
	\cE_s(w_1,w_2\circ \tau_\Sigma)\leq \cE_s(w_1,w_2)
	\]
	and the inequality is strict if~$w_1$ and~$w_2$ are nontrivial.
\end{lemma}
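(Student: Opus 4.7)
The approach is a direct application of the disjoint-support representation \eqref{disjointsupport}. For $s\in(1,2)$ we have $\intgr{s}=1$, so ${(-1)}^{\intgr{s}+1}=1$ and \eqref{disjointsupport} identifies the energy pairing of any two $H^s$-functions with disjoint supports with the \emph{positive} double integral
\begin{align*}
\cE_s(u,v)=\frac{c_{n,s}}{2}\int_{\R^n}\int_{\R^n}\frac{u(x)\,v(y)}{{|x-y|}^{n+2s}}\,dx\,dy.
\end{align*}
The first item to check is that both pairings $(w_1,w_2)$ and $(w_1,w_2\circ\tau_\Sigma)$ satisfy this disjoint-support hypothesis: the former does by assumption, while for the latter we use that $w_1,w_2\in\cH^s_0(\Sigma)$ forces $w_1=0$ outside $\overline\Sigma$ and $w_2\circ\tau_\Sigma=0$ outside $\overline{\tau_\Sigma(\Sigma)}$, two sets whose intersection is contained in the null set $\partial\Sigma$.

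Granted this, the plan is to represent both energies via \eqref{disjointsupport} and then perform the isometric change of variable $y\mapsto\tau_\Sigma(y)$ in the integral for $\cE_s(w_1,w_2\circ\tau_\Sigma)$; since $\tau_\Sigma$ is an involutive isometry of $\R^n$, this rewrites that integral as
\begin{align*}
\cE_s(w_1,w_2\circ\tau_\Sigma)=\frac{c_{n,s}}{2}\int_{\R^n}\int_{\R^n}\frac{w_1(x)\,w_2(y)}{{|x-\tau_\Sigma(y)|}^{n+2s}}\,dx\,dy,
\end{align*}
so that the difference $\cE_s(w_1,w_2)-\cE_s(w_1,w_2\circ\tau_\Sigma)$ takes the form
\begin{align*}
\frac{c_{n,s}}{2}\int_\Sigma\int_\Sigma w_1(x)\,w_2(y)\,\Bigg(\frac{1}{{|x-y|}^{n+2s}}-\frac{1}{{|x-\tau_\Sigma(y)|}^{n+2s}}\Bigg)\,dx\,dy,
\end{align*}
the effective domain of integration being $\Sigma\times\Sigma$ thanks to the supports of $w_1$ and $w_2$.

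The last step is an elementary kernel comparison. Choosing coordinates so that $\Sigma=\{x_1>0\}$ and $\tau_\Sigma(x_1,x')=(-x_1,x')$, one gets
\begin{align*}
{|x-\tau_\Sigma(y)|}^2-{|x-y|}^2={(x_1+y_1)}^2-{(x_1-y_1)}^2=4x_1y_1>0 \qquad\text{for }x,y\in\Sigma,
\end{align*}
so the kernel difference is strictly positive on $\Sigma\times\Sigma$. Together with $w_1,w_2\geq 0$ this yields the desired inequality; moreover, if $w_1$ and $w_2$ are both nontrivial then $\{w_1>0\}\times\{w_2>0\}$ has positive measure in $\Sigma\times\Sigma$ and strictness follows.

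I do not foresee a serious obstacle: \eqref{disjointsupport} does essentially all of the work, the change of variable is standard, and the kernel comparison is a one-line computation. The only mildly delicate point is the verification of the disjoint-support condition for the pair $(w_1,w_2\circ\tau_\Sigma)$, but this reduces to the fact that $\cH^s_0(\Sigma)$-functions vanish off $\overline\Sigma$ combined with the involutive isometric nature of $\tau_\Sigma$.
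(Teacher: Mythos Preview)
Your proof is correct and follows essentially the same route as the paper: apply \eqref{disjointsupport} to both pairings (after checking disjoint supports), change variable $y\mapsto\tau_\Sigma(y)$, and compare the kernels via $|x-\tau_\Sigma(y)|\geq|x-y|$ for $x,y\in\Sigma$. The only cosmetic difference is that the paper first passes by density to $C^\infty_c(\Sigma)$ functions, whereas you invoke \eqref{disjointsupport} directly for $H^s$ functions; your explicit coordinate computation of the kernel difference and your treatment of the strict-inequality case are slightly more detailed than the paper's.
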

\begin{proof}
	By density, we may think of~$w_1,w_2\in C^{\infty}_c(\Sigma)$ with disjoint supports. Moreover, since $ w_1w_2\equiv 0$ in~$\R^n$ and~$w_i\1_{\R^n\setminus \Sigma}\equiv 0$ in~$\R^n$,~$i=1,2$, we also have~$w_1(w_2\circ \tau_\Sigma)\equiv 0$ in~$\R^n$. Hence,~\eqref{disjointsupport} implies
	\begin{align*}
	\cE_s(w_1,w_2\circ \tau_\Sigma) &= 
	\frac{c_{n,s}}2\int_{\Sigma}\int_{\R^n\setminus \Sigma}\frac{w_1(x) \, w_2\big(\tau_\Sigma(y)\big)}{{|x-y|}^{n+2s}}\;dy\;dx
	=\frac{c_{n,s}}2\int_{\Sigma}\int_{\Sigma}\frac{w_1(x) \, w_2(y)}{{|x-\tau_\Sigma(y)|}^{n+2s}}\;dy\;dx \\
	& \leq \frac{c_{n,s}}2\int_{\Sigma}\int_{\Sigma}\frac{w_1(x) \, w_2(y)}{{|x-y|}^{n+2s}}\;dy\;dx=\cE_s(w_1,w_2),
	\end{align*}
	since~$|x-\tau_\Sigma(y)|\geq |x-y|$ for~$x,y\in \Sigma$.
\end{proof}

\begin{lemma}\label{lemma:first separation}
Let~$s\in(1,3/2)$,~$\Sigma$ an open half-space in $\R^n$, and~$w\in H^s(\R^n)$ such that~$w\circ \tau_\Sigma=-w$, \emph{i.e.},~$w$ is antisymmetric with respect to~$\partial \Sigma$. Then
\begin{equation}\label{eq:anti-sym}
\cE_s(w,w)\leq \cE_s(|w|\1_\Sigma-|w|\1_{\R^n\setminus \Sigma},|w|\1_H-|w|\1_{\R^n\setminus \Sigma}).
\end{equation}
Moreover, the inequality is strict if~$w\1_\Sigma$ is sign-changing.
\end{lemma}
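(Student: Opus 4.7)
The plan is to reduce the inequality \eqref{eq:anti-sym} to a direct application of Lemma~\ref{lemma:disjoint:support}, after a bilinearity calculation. Since $w$ is antisymmetric with respect to $\partial\Sigma$, Proposition~\ref{lemma:antisymmetric} gives $w\1_\Sigma\in\cH^s_0(\Sigma)$. I would then set
\[
v_1:=(w\1_\Sigma)_+, \qquad v_2:=(w\1_\Sigma)_-,
\]
so that $v_1,v_2\geq 0$, $v_1 v_2\equiv 0$ in $\R^n$, and $w\1_\Sigma=v_1-v_2$. The first thing to verify is that both $v_1$ and $v_2$ belong to $\cH^s_0(\Sigma)$; since $s<3/2$, the truncation $u\mapsto u_\pm$ is continuous on $H^s(\R^n)$ (cf.\ \cite{BM91}*{Théorème~1}), hence $v_1,v_2\in H^s(\R^n)$, and the support condition is preserved.

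Using antisymmetry, $w=(v_1-v_2)-(v_1-v_2)\circ\tau_\Sigma$, while the right-hand side function in \eqref{eq:anti-sym} is
\[
u:=|w|\1_\Sigma-|w|\1_{\R^n\setminus\Sigma}=(v_1+v_2)-(v_1+v_2)\circ\tau_\Sigma.
\]
Writing $\tilde v_i:=v_i\circ\tau_\Sigma$, I would exploit the reflection invariance $\cE_s(\tilde f,\tilde g)=\cE_s(f,g)$ and the symmetry $\cE_s(f,\tilde g)=\cE_s(g,\tilde f)$ to expand the two quadratic forms. With $A:=\cE_s(v_1,v_1)$, $B:=\cE_s(v_2,v_2)$, $C:=\cE_s(v_1,v_2)$, $D:=\cE_s(v_1,\tilde v_1)$, $E:=\cE_s(v_2,\tilde v_2)$, $F:=\cE_s(v_1,\tilde v_2)$, a short computation gives
\[
\cE_s(w,w)=2A+2B-4C-2D-2E+4F,\qquad \cE_s(u,u)=2A+2B+4C-2D-2E-4F,
\]
so that
\[
\cE_s(u,u)-\cE_s(w,w)=8\bigl[\cE_s(v_1,v_2)-\cE_s(v_1,v_2\circ\tau_\Sigma)\bigr].
\]

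At this point Lemma~\ref{lemma:disjoint:support}, applied to $w_1=v_1$, $w_2=v_2$ (which indeed lie in $\cH^s_0(\Sigma)$, are nonnegative, and have disjoint supports), yields $\cE_s(v_1,v_2\circ\tau_\Sigma)\leq\cE_s(v_1,v_2)$, proving \eqref{eq:anti-sym}; moreover the same lemma gives the strict inequality as soon as both $v_1$ and $v_2$ are nontrivial, which is precisely the condition that $w\1_\Sigma$ be sign-changing.

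The main obstacle I expect is the justification that $v_1,v_2\in\cH^s_0(\Sigma)$: the truncation argument requires $s<3/2$ and is precisely the reason the statement is restricted to this range (mirroring the threshold already appearing in Proposition~\ref{lemma:antisymmetric} and the surrounding remark). The rest of the argument is essentially algebraic once one keeps careful track of the reflection symmetries of $\cE_s$.
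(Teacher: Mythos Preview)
Your proof is correct and follows essentially the same route as the paper: invoke Proposition~\ref{lemma:antisymmetric} and \cite{BM91} to put $v_1=(w\1_\Sigma)_+$, $v_2=(w\1_\Sigma)_-$ in $\cH^s_0(\Sigma)$, expand both quadratic forms in terms of the four pieces $v_1,v_2,\tilde v_1,\tilde v_2$, and reduce the difference to Lemma~\ref{lemma:disjoint:support}. Your bookkeeping via the six scalars $A,\dots,F$ is somewhat tidier than the paper's expansion, but the content is identical.
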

\begin{proof}
By Proposition~\ref{lemma:antisymmetric} we have~$w\1_\Sigma\in \cH^s_0(\Sigma)$ and hence also~$(w\1_\Sigma)^{\pm}=w^{\pm}\1_\Sigma\in \cH^s_0(\Sigma)$ by~\cite{BM91}*{Théorème 1}, since~$s<3/2$. Splitting~$w=w^{+}\1_\Sigma-w^-\1_\Sigma+w^+\1_{\R^n\setminus \Sigma}-w^-\1_{\R^n\setminus \Sigma}$ and~$|w|=w^++w^-$ we have that~\eqref{eq:anti-sym} is equivalent to
\begin{align*}
&-\cE_s(w^+\1_\Sigma,w^-\1_\Sigma)+\cE_s(w^+\1_\Sigma,w^+\1_{\R^n\setminus \Sigma})-\cE_s(w^+\1_\Sigma,w^-\1_{\R^n\setminus \Sigma}) \\
&-\cE_s(w^-\1_\Sigma,w^+\1_{\R^n\setminus \Sigma})+\cE_s(w^-\1_\Sigma,w^-\1_{\R^n\setminus \Sigma})-\cE_s(w^+\1_{\R^n\setminus \Sigma},w^-\1_{\R^n\setminus \Sigma})\\
&\qquad\leq \cE_s(w^+\1_\Sigma,w^-\1_\Sigma)-\cE_s(w^+\1_\Sigma,w^+\1_{\R^n\setminus \Sigma})-\cE_s(w^+\1_\Sigma,w^-\1_{\R^n\setminus \Sigma})\\
&\qquad\quad -\cE_s(w^-\1_\Sigma,w^+\1_{\R^n\setminus \Sigma})-\cE_s(w^-\1_\Sigma,w^-\1_{\R^n\setminus \Sigma})+\cE_s(w^+\1_{\R^n\setminus \Sigma},w^-\1_{\R^n\setminus \Sigma}),
\end{align*}
or, simply,
\[
\cE_s(w^+\1_\Sigma,w^+\1_{\R^n\setminus \Sigma})+\cE_s(w^-\1_\Sigma,w^-\1_{\R^n\setminus \Sigma})\leq 
\cE_s(w^+\1_\Sigma,w^+\1_{\Sigma})+\cE_s(w^+\1_{\R^n\setminus \Sigma},w^-\1_{\R^n\setminus \Sigma}).
\]
Using the anti-symmetry of~$w$, one has that
\[
w^+\1_{\R^n\setminus \Sigma}=(w^-\1_\Sigma)\circ \tau_\Sigma
\quad\text{and}\quad 
w^-\1_{\R^n\setminus \Sigma}=(w^+\1_\Sigma)\circ \tau_\Sigma,
\]
so that we only need to show
\[
\cE_s(w^+\1_\Sigma,(w^-\1_{\Sigma})\circ\tau_\Sigma)\leq \cE_s(w^+\1_\Sigma,w^-\1_{\Sigma}),
\]
since the scalar product is invariant under reflections. This last inequality holds in view of Lemma~\ref{lemma:disjoint:support}.
\end{proof}

\begin{proposition}\label{polarization1}
Let~$s\in(1,3/2)$,~$\Sigma$ an open half-space in $\R^n$, and~$u\in H^s(\R^n)$. Then
\[
\cE_s(u_\Sigma,u_\Sigma\circ\tau_\Sigma)\leq \cE_s(u,u\circ \tau_\Sigma).
\]
Moreover, this inequality is strict if~$u_\Sigma\neq u$.
\end{proposition}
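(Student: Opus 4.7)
The approach is to decompose $u$ into its symmetric and antisymmetric parts with respect to $\tau_\Sigma$, exploit the orthogonality of these two components inside $\cE_s$, and then reduce the claim to Lemma~\ref{lemma:first separation}. Write
\[
S := \frac{u + u\circ\tau_\Sigma}{2}, \qquad A := \frac{u - u\circ\tau_\Sigma}{2},
\]
so that $u = S + A$ and $u\circ\tau_\Sigma = S - A$, with $S$ symmetric and $A$ antisymmetric about $\partial\Sigma$; both belong to $H^s(\R^n)$ by reflection invariance. On the Fourier side, $\cF S$ is even and $\cF A$ is odd in the normal variable, so $\cF S\cdot\overline{\cF A}$ is odd there and thus $\cE_s(S,A) = 0$. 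Bilinearity and symmetry of $\cE_s$ then give
\[
\cE_s(u, u\circ\tau_\Sigma) \;=\; \cE_s(S,S) - \cE_s(A,A).
\]

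By \eqref{eq:anti-remark} the polarized function $u_\Sigma$ shares the same symmetric part $S$, while \eqref{polarization-different} identifies its antisymmetric part as
\[
\tilde A \;:=\; |A|\,\1_\Sigma - |A|\,\1_{\R^n\setminus\Sigma},
\]
which is indeed antisymmetric (because $|A|$ is symmetric) and lies in $H^s(\R^n)$ since $u_\Sigma$ does. Applying the same orthogonality identity to $u_\Sigma$ yields $\cE_s(u_\Sigma, u_\Sigma\circ\tau_\Sigma) = \cE_s(S,S) - \cE_s(\tilde A,\tilde A)$, so the proposition collapses to
\[
\cE_s(A,A) \;\leq\; \cE_s(\tilde A,\tilde A),
\]
which is precisely Lemma~\ref{lemma:first separation} applied to the antisymmetric function $w := u - u\circ\tau_\Sigma = 2A$ (noticing that $|w|\,\1_\Sigma - |w|\,\1_{\R^n\setminus\Sigma} = 2\tilde A$).

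For the strictness, Lemma~\ref{lemma:first separation} delivers a strict inequality whenever $A\1_\Sigma$ is sign-changing, i.e.\ whenever both $\{u > u\circ\tau_\Sigma\}\cap\Sigma$ and $\{u < u\circ\tau_\Sigma\}\cap\Sigma$ have positive measure. The hypothesis $u_\Sigma \neq u$ secures the second set; should this be the only nontrivial one (so that $u \leq u\circ\tau_\Sigma$ a.e.\ on $\Sigma$, hence $u_\Sigma = u\circ\tau_\Sigma$), the two sides of the desired inequality trivially coincide by reflection invariance of $\cE_s$, and the non-strict assertion still holds. The main technical burden is carried by Lemma~\ref{lemma:first separation}; the only points requiring care in the present argument are the identification of $\tilde A$ with the antisymmetric part of $u_\Sigma$ and its membership in $H^s(\R^n)$.
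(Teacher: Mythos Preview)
Your argument is essentially the paper's own: both split $u$ into its symmetric and antisymmetric parts with respect to $\tau_\Sigma$ (the paper via the algebraic identity $4\cE_s(u,u\circ\tau_\Sigma)=\cE_s(u+u\circ\tau_\Sigma,u+u\circ\tau_\Sigma)-\cE_s(u-u\circ\tau_\Sigma,u-u\circ\tau_\Sigma)$, you via the orthogonality $\cE_s(S,A)=0$), observe that the symmetric parts of $u$ and $u_\Sigma$ coincide by~\eqref{eq:anti-remark}, identify the antisymmetric part of $u_\Sigma$ as $|A|\1_\Sigma-|A|\1_{\R^n\setminus\Sigma}$, and then invoke Lemma~\ref{lemma:first separation}. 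The degenerate case you single out for strictness --- $A\1_\Sigma\leq 0$ a.e., whence $u_\Sigma=u\circ\tau_\Sigma$ and the two sides coincide --- is a genuine border case that the paper's proof does not treat either; strictness as literally stated would require $u_\Sigma\notin\{u,u\circ\tau_\Sigma\}$.
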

\begin{proof}
First note that
\[
4\cE_s(u,u\circ\tau_\Sigma)=\cE_s(u+u\circ\tau_\Sigma,u+u\circ\tau_\Sigma)-\cE_s(u-u\circ\tau_\Sigma,u-u\circ\tau_\Sigma).
\]
Using this observation and~\eqref{polarization-different}, it follows that
\[
4\cE_s(u_\Sigma,u_\Sigma\circ\tau_\Sigma)-4\cE_s(u,u\circ \tau_\Sigma)=\cE_s(u-u\circ\tau_\Sigma,u-u\circ\tau_\Sigma)-\cE_s(u_\Sigma-u_\Sigma\circ\tau_\Sigma,u_\Sigma-u_\Sigma\circ\tau_\Sigma)
\]
since~$u+u\circ \tau_\Sigma=u_\Sigma+u_\Sigma\circ \tau_\Sigma$ by definition. Noting that~$u-u\circ\tau_\Sigma$ is anti-symmetric with respect to~$\partial \Sigma$ and that
\[
u_\Sigma-u_\Sigma\circ\tau_\Sigma=|u-u\circ\tau_\Sigma|\1_\Sigma-|u-u\circ\tau_\Sigma|\1_{\R^N\setminus \Sigma}
\]
the assertion follows from Lemma~\ref{lemma:first separation}.
\end{proof}

\begin{theorem}\label{polarization-inequality} 
Let~$s\in(1,3/2)$ and $\Sigma$ an open half-space in $\R^n$- Then for all~$u\in H^s(\R^n)$ we have~$u_\Sigma\in H^s(\R^n)$ and
\[
\cE_s(u_\Sigma,u_\Sigma)\geq \cE_s(u,u).
\]
Moreover, this inequality is strict if~$u_\Sigma\neq u$.
\end{theorem}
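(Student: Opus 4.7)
The plan is to reduce the inequality directly to Proposition~\ref{polarization1} by exploiting the reflection invariance of $\cE_s$ and the elementary polarization identity~\eqref{eq:anti-remark}. The membership $u_\Sigma \in H^s(\R^n)$ is already granted by the representation~\eqref{polarization-different} together with~\cite{BM91}, so the core task is the energy comparison.

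First I would observe that reflection invariance $\cE_s(v\circ\tau_\Sigma, w\circ\tau_\Sigma) = \cE_s(v,w)$ yields the two algebraic identities
\begin{align*}
\cE_s(v+v\circ\tau_\Sigma, v+v\circ\tau_\Sigma) &= 2\cE_s(v,v) + 2\cE_s(v, v\circ\tau_\Sigma),\\
\cE_s(v-v\circ\tau_\Sigma, v-v\circ\tau_\Sigma) &= 2\cE_s(v,v) - 2\cE_s(v, v\circ\tau_\Sigma),
\end{align*}
for any $v\in H^s(\R^n)$. Applying the first identity with $v=u$ and with $v=u_\Sigma$, and recalling that $u_\Sigma + u_\Sigma\circ\tau_\Sigma = u + u\circ\tau_\Sigma$ by~\eqref{eq:anti-remark}, I conclude that
\[
\cE_s(u_\Sigma,u_\Sigma) + \cE_s(u_\Sigma, u_\Sigma\circ\tau_\Sigma) = \cE_s(u,u) + \cE_s(u, u\circ\tau_\Sigma).
\]

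Rearranging gives
\[
\cE_s(u_\Sigma,u_\Sigma) - \cE_s(u,u) = \cE_s(u,u\circ\tau_\Sigma) - \cE_s(u_\Sigma, u_\Sigma\circ\tau_\Sigma),
\]
and the right-hand side is nonnegative precisely by Proposition~\ref{polarization1}, with strict inequality whenever $u_\Sigma \neq u$. This closes the argument.

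There is no real obstacle: all the technical work (the antisymmetric splitting, the disjoint-support computation, and the use of the identity $|x-\tau_\Sigma(y)|\geq |x-y|$ on $\Sigma$) has been absorbed into Proposition~\ref{polarization1}. The remaining step is purely a bilinear bookkeeping that converts the inequality for the cross term $\cE_s(\cdot,\cdot\circ\tau_\Sigma)$ into the desired inequality for the diagonal energy, exploiting that polarization preserves the symmetric part $u+u\circ\tau_\Sigma$.
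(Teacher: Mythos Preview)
Your proof is correct and follows essentially the same route as the paper's: both use reflection invariance of $\cE_s$ together with the identity~\eqref{eq:anti-remark} to obtain $\cE_s(u_\Sigma,u_\Sigma)-\cE_s(u,u)=\cE_s(u,u\circ\tau_\Sigma)-\cE_s(u_\Sigma,u_\Sigma\circ\tau_\Sigma)$, and then invoke Proposition~\ref{polarization1}. The only cosmetic difference is that the paper writes out the intermediate step $\frac12\cE_s(u_\Sigma+u_\Sigma\circ\tau_\Sigma,u_\Sigma+u_\Sigma\circ\tau_\Sigma)$ explicitly before substituting via~\eqref{eq:anti-remark}, whereas you package this into the bilinear identity for $v+v\circ\tau_\Sigma$.
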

\begin{proof}
Recall identity~\eqref{eq:anti-remark}.
Note that
\begin{align*}
\cE_s(u_\Sigma,u_\Sigma)&=\frac{1}{2}\cE_s(u_\Sigma,u_\Sigma)+\frac{1}{2}\cE_s(u_\Sigma\circ\tau_\Sigma,u_\Sigma\circ\tau_\Sigma)\\
&=\frac{1}{2}\cE_s(u_\Sigma+u_\Sigma\circ\tau_\Sigma,u_\Sigma+u_\Sigma\circ\tau_\Sigma)-\cE_s(u_\Sigma,u_\Sigma\circ\tau_\Sigma)\\
&=\frac{1}{2}\cE_s(u+u\circ\tau_\Sigma,u+u\circ\tau_\Sigma)-\cE_s(u_\Sigma,u_\Sigma\circ\tau_\Sigma)\\
&=\cE_s(u,u)+\cE_s(u,u\circ\tau_\Sigma)-\cE_s(u_\Sigma,u_\Sigma\circ\tau_\Sigma).
\end{align*}
Note that we have used~\eqref{eq:anti-remark}.
The claim hence follows from Proposition~\ref{polarization1}.
\end{proof}

\section{The P\'olya-\texorpdfstring{Szeg\H o}{Szego} inequality}

We start by recalling the definition of the symmetric decreasing rearrangement.
We retrieve the definitions in~\cite{talenti}.
\begin{definition}\label{def:schwarz}
Given a measurable function~$u:\R^n\to\R$ the \textit{decreasing rearrangement} of~$u$ is the function
\begin{align*}
\widetilde u:\ [0,\infty)&\longrightarrow[0,\infty) \\
t&\longmapsto\inf\big\{\tau\geq 0:\big|\{|u|>\tau\}\big|<t\big\}
\end{align*}
and the \textit{spherical decreasing rearrangement} is
\begin{align*}
u^*:\ \R^n&\longrightarrow[0,\infty) \\
x&\longmapsto\widetilde u\big(\omega_n|x|^n\big).
\end{align*}
\end{definition}

\subsection{An example}
\label{par:polyaszego-ex}

Consider~$v\in\cH^s_0(B_1)$, nonnegative, radial, and radially decreasing.
Note that we can also write~$v(x)=w(|x|),\ x\in\R^n$, 
for some~$w:[0,\infty)\to[0,\infty)$.

Fix~$x_0\in\R^n,\ |x_0|>2,$ and define
\begin{align*}
u(x):=v(x)+v_{a,x_0}(x),\quad v_{a,x_0}(x):=v(ax-x_0),\qquad x\in\R^n,\ a\geq 1.
\end{align*}
For~$t\in\R$, consider the superlevels of~$u$, \textit{i.e.},~$A_t=\{x\in\R^n:u(x)>t\}$. It holds
\begin{align*}
|A_t|=\big|\{x\in\R^n:u(x)>t\}\big|=\big|\{x\in\R^n:v(x)>t\}\big|+\big|\{x\in\R^n:v(ax)>t\}\big|,
\end{align*}
so that, the symmetrization~$A_t^*$ of~$A_t$ coincides with
\begin{align*}
A_t^*=B\bigg(0,\omega_n^{-1/n}\Big(\big|\{x\in\R^n:v(x)>t\}\big|+\big|\{x\in\R^n:v(ax)>t\}\big|\Big)^{1/n}\bigg).
\end{align*}
We now compute~$u^*$, the spherical decreasing rearrangement of~$u$.
By definition,
\begin{align*}
u^*(y)=\inf\{t\in\R:y\in A_t^*\}
\end{align*}
where
\begin{align*}
y\in A_t^*\qquad\text{if and only if}\qquad
|y|^n 	& < \omega_n^{-1}\big|\{x\in\R^n:v(x)>t\}\big|+
			\omega_n^{-1}\big|\{x\in\R^n:v(ax)>t\}\big| \\
		& = \big|\{z>0:w(z)>t\}\big|^n+\big|\{z>0:w(az)>t\}\big|^n \\ 
		& = \big|w^{-1}(t)\big|^n + \bigg|\frac{w^{-1}(t)}a\bigg|^n = \Big(1+\frac1{a^n}\Big)\big|w^{-1}(t)\big|^n
\end{align*}
which implies 
\begin{align*}
u^*(y)=w\bigg(\frac{a|y|}{{(1+a^n)}^{1/n}}\bigg)
=v_{b,0}(y),\quad b:=\frac{a}{{(1+a^n)}^{1/n}}, \qquad \text{for }y\in\R^n.
\end{align*}
Let us now compare the Sobolev energies:
\begin{align*}
\cE_s(u,u) &= 
\cE_s(v,v)+2\cE_s\big(v,v_{a,x_0}\big)+\cE_s\big(v_{a,x_0},v_{a,x_0}\big) \\
&= 
(1+a^{2s-n})\cE_s(v,v)+2\cE_s\big(v,v_{a,x_0}\big) \geq 
(1+a^{2s-n})\,\cE_s(v,v), \\
\cE_s(u^*,u^*) &= \cE_s\big(v_{b,0},v_{b,0}\big)=b^{2s-n}\cE_s(v,v)=\frac{a^{2s-n}}{{(1+a^n)}^{(2s-n)/n}}\,\cE_s(v,v);
\end{align*}
here we must remark that 
\begin{equation}\label{example-inequality}
\frac{a^{2s-n}}{{(1+a^n)}^{(2s-n)/n}} < 1+a^{2s-n}, \qquad a> 1,
\end{equation}
so that we find
\begin{equation}\label{example1}
\cE_s(u^*,u^*)< \cE_s(u,u).
\end{equation}

\noindent Indeed, \eqref{example-inequality} is equivalent to
\begin{align*}
{(1+a^n)}^{1-2s/n} < 1+{(a^n)}^{1-2s/n}, \qquad a> 1,
\end{align*}
which holds by the subadditivity of~$t\mapsto t^\gamma$ for~$t\geq 1,\gamma\leq 1$.

\subsection{A counterexample}
\label{par:polyaszego-counter}

Let us consider~$n=1$, restrict~$s\in(1,3/2)$, and define for~$M>1$
\begin{align*}
u(x):=\left\lbrace\begin{aligned}
& 0 & & \text{for } x\leq -2 \\
& x+2 & & \text{for }-2<x\leq 0 \\
& -x+2 & & \text{for } 0<x\leq 1 \\
& 1 & & \text{for }1<x\leq 2M-1 \\
& -x+2M & & \text{for }2M-1<x\leq 2M \\
& 0 & & \text{for } x>2M.
\end{aligned}\right.
\end{align*}
The spherical rearrangement of~$u$ is
\begin{align*}
u^*(x):=\left\lbrace\begin{aligned}
%& 0 & & \text{for } x\leq -M-1 \\
%& x+M+1 & & \text{for } -M-1<x\leq -M \\
%& 1 & & \text{for } -M<x\leq -1 \\
%& x+2 & & \text{for } -1<x\leq 0 \\
& -|x|+2 & & \text{for } 0\leq|x|\leq 1 \\
& 1 & & \text{for } 1<|x|\leq M \\
& -|x|+M+1 & & \text{for } M<|x|\leq M+1 \\
& 0 & & \text{for } |x|> M+1.
\end{aligned}\right.
\end{align*}
\begin{figure}[!t] \centering

	\begin{tikzpicture}\footnotesize
		\draw (-.49\textwidth,0pt) -- (-.05\textwidth,0pt) ;
		\node at (-.05\textwidth-3pt,6pt) {$x$} ;
		\node at (-.345\textwidth+10pt,.15\textwidth-3pt) {$u(x)$} ;
		\draw (-.35\textwidth,-8pt) -- (-.35\textwidth, .15\textwidth) ;
		\draw (.05\textwidth,0pt) -- (.49\textwidth,0pt) ;
		\node at (.5\textwidth-6pt,6pt) {$x$} ;
		\node at (.28\textwidth+12pt,.15\textwidth-3pt) {$u^*(x)$} ;
		\draw (.275\textwidth,-8pt) -- (.275\textwidth, .15\textwidth) ;
		\node at (-.45\textwidth,-6pt) {$-2$} ;
		\node at (-.35\textwidth-5pt,-6pt) {$0$} ;
		\node at (-.3\textwidth,-6pt) {$1$} ;
		\draw [densely dotted] (-.3\textwidth,0pt) -- (-.3\textwidth,.05\textwidth) ;
		\node at (-.16\textwidth,-6pt) {$2M\!-\!1$} ;
		\draw [densely dotted] (-.15\textwidth,0pt) -- (-.15\textwidth,.05\textwidth) ;
		\node at (-.09\textwidth,-6pt) {$2M$} ;
		\node at (-.35\textwidth-5pt,.05\textwidth) {$1$} ;
		\draw [densely dotted] (-.35\textwidth,.05\textwidth) -- (-.3\textwidth,.05\textwidth) ;
		\node at (-.35\textwidth-5pt,.1\textwidth+3pt) {$2$} ;
		\draw [ultra thick] plot coordinates {(-.49\textwidth,0pt) (-.45\textwidth,0pt) (-.35\textwidth,.1\textwidth) (-.3\textwidth,.05\textwidth) (-.15\textwidth,.05\textwidth) (-.1\textwidth,0pt) (-.05\textwidth,0pt)} ;
		\node at (.09\textwidth,-6pt) {$-\!M\!-\!1$} ;
		\node at (.16\textwidth,-6pt) {$-\!M$} ;
		\node at (.225\textwidth,-6pt) {$-1$} ;
		\node at (.275\textwidth-5pt,-6pt) {$0$} ;
		\node at (.325\textwidth,-6pt) {$1$} ;
		\node at (.4\textwidth,-6pt) {$M$} ;
		\node at (.45\textwidth,-6pt) {$M\!+\!1$} ;
		\draw [densely dotted] (.225\textwidth,0pt) -- (.225\textwidth,.05\textwidth) ;
		\draw [densely dotted] (.325\textwidth,0pt) -- (.325\textwidth,.05\textwidth) ;
		\draw [densely dotted] (.15\textwidth,0pt) -- (.15\textwidth,.05\textwidth) ;
		\draw [densely dotted] (.4\textwidth,0pt) -- (.4\textwidth,.05\textwidth) ;
		\draw [densely dotted] (.225\textwidth,.05\textwidth) -- (.325\textwidth,.05\textwidth) ;
		\node at (.275\textwidth-5pt,.05\textwidth-6pt) {$1$} ;		
		\node at (.275\textwidth-5pt,.1\textwidth+3pt) {$2$} ;
		\draw [ultra thick] plot coordinates {(.49\textwidth,0pt) (.45\textwidth,0pt) (.4\textwidth,.05\textwidth) (.325\textwidth,.05\textwidth) (.275\textwidth,.1\textwidth) (.225\textwidth,.05\textwidth) (.15\textwidth,.05\textwidth) (.1\textwidth,0pt) (.05\textwidth,0pt)} ;
	\end{tikzpicture}
	\caption{The graphs of~$u$ and~$u^*$.}
\end{figure}
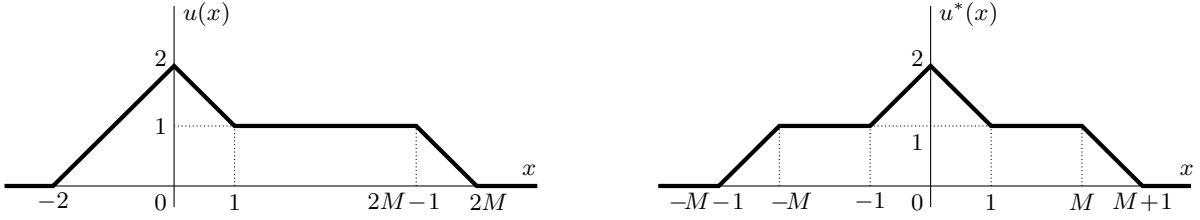

Let us now explicitly compute the Dirichlet energies associated to both functions. 
As to~$u$, we have (we are going to extensively use translation invariance and~\eqref{eq:energies})
\begin{align*}
&\cE_s(u,u) 
=\cE_{s-1}(u',u')=\cE_{s-1}\big( \1_{(-2,0)}-\1_{(0,1)}-\1_{(2M-1,2M)},\1_{(-2,0)}-\1_{(0,1)}-\1_{(2M-1,2M)} \big) \\
& =\cE_{s-1}\big( \1_{(-2,-1)}+\1_{(-1,0)}-\1_{(0,1)}-\1_{(2M-1,2M)},\1_{(-2,-1)}+\1_{(-1,0)}-\1_{(0,1)}-\1_{(2M-1,2M)} \big) \\
&=4\cE_{s-1}(\1_{(0,1)},\1_{(0,1)})-2\cE_{s-1}(\1_{(-2,-1)},\1_{(0,1)})-2\cE_s(\1_{(-2,-1)},\1_{(2M-1,2M)}) \\
&\qquad -2\cE_{s-1}(\1_{(-1,0)},\1_{(2M-1,2M)})+2\cE_{s-1}(\1_{(0,1)},\1_{(2M-1,2M)}),
\end{align*}
whereas
\begin{align*}
&\cE_s(u^*,u^*)=\\
&=\cE_{s-1}(\1_{(-M-1,-M)}+\1_{(-1,0)}-\1_{(0,1)}-\1_{(M,M+1)},\1_{(-M-1,-M)}+\1_{(-1,0)}-\1_{(0,1)}-\1_{(M,M+1)}) \\
&=4\cE_{s-1}(\1_{(0,1)},\1_{(0,1)})+4\cE_{s-1}(\1_{(0,1)},\1_{(M,M+1)})-4\cE_{s-1}(\1_{(-1,0)},\1_{(M,M+1)}) \\
&\qquad -2\cE_{s-1}(\1_{(-M-1,-M)},\1_{(M,M+1)})-2\cE_{s-1}(\1_{(-1,0)},\1_{(0,1)}).
\end{align*}
Summing up,
\begin{align*}
\cE_s(u,u)-\cE_s(u^*,u^*)
=\ & -2\cE_{s-1}(\1_{(-2,-1)},\1_{(0,1)})-2\cE_s(\1_{(-2,-1)},\1_{(2M-1,2M)}) \\
   & -2\cE_{s-1}(\1_{(-1,0)},\1_{(2M-1,2M)})+2\cE_{s-1}(\1_{(0,1)},\1_{(2M-1,2M)}) \\
   & -4\cE_{s-1}(\1_{(0,1)},\1_{(M,M+1)})+4\cE_{s-1}(\1_{(-1,0)},\1_{(M,M+1)}) \\
   & +2\cE_{s-1}(\1_{(-M-1,-M)},\1_{(M,M+1)})+2\cE_{s-1}(\1_{(-1,0)},\1_{(0,1)}) \\
=\ & -2\cE_{s-1}(\1_{(0,1)},\1_{(-2,-1)})-2\cE_s(\1_{(0,1)},\1_{(2M+1,2M+2)}) \\
   & -2\cE_{s-1}(\1_{(0,1)},\1_{(2M,2M+1)})+2\cE_{s-1}(\1_{(0,1)},\1_{(2M-1,2M)}) \\
   & -4\cE_{s-1}(\1_{(0,1)},\1_{(M,M+1)})+4\cE_{s-1}(\1_{(0,1)},\1_{(M+1,M+2)}) \\
   & +2\cE_{s-1}(\1_{(0,1)},\1_{(2M+1,2M+2)})+2\cE_{s-1}(\1_{(0,1)},\1_{(-1,0)}) \\
=\ & -2\cE_{s-1}(\1_{(0,1)},\1_{(-2,-1)})+2\cE_{s-1}(\1_{(0,1)},\1_{(-1,0)}) \\
   & -2\cE_{s-1}(\1_{(0,1)},\1_{(2M,2M+1)})+2\cE_{s-1}(\1_{(0,1)},\1_{(2M-1,2M)}) \\
   & -4\cE_{s-1}(\1_{(0,1)},\1_{(M,M+1)})+4\cE_{s-1}(\1_{(0,1)},\1_{(M+1,M+2)}).
\end{align*}
As~$\cE_{s-1}(\1_{(0,1)},\1_{(-1,0)})<\cE_{s-1}(\1_{(0,1)},\1_{(-2,-1)})$ 
---because the kernel is radially decreasing and the interactions are negative by~\eqref{disjointsupport}---
and~$\cE_{s-1}(\1_{(0,1)},\1_{(M,M+1)})\to 0$ as~$M\uparrow\infty$
---along with the other similar terms---, we deduce that there exists~$M^*(s)>0$ 
such that for any~$M>M^*(s)$ 
\begin{equation}\label{example2}
\cE_s(u,u)<\cE_s(u^*,u^*).
\end{equation}

\begin{remark}
A close inspection of~$\cE_s(u,u)-\cE_s(u^*,u^*)$ can actually show that
$M^*(s)=1$ for any~$s\in(1,3/2)$. Indeed, let us define the functions
\begin{align*}
\begin{aligned}
f:[1,\infty) & \longrightarrow \R \\
r & \longmapsto -2\cE_{s-1}(\1_{(0,1)},\1_{(r,r+1)}) ,
\end{aligned}
\qquad\text{and}\qquad
\begin{aligned}
g:[1,\infty) & \longrightarrow \R \\
r & \longmapsto f(r+1)-f(r).
\end{aligned}
\end{align*}
As~$f$ is positive and decreasing,~$g$ is negative and 
\begin{align*}
\cE_s(u,u)-\cE_s(u^*,u^*)=g(1)-2g(M)+g(2M-1).
\end{align*}
Note that the following identities hold for~$r>1$:
\begin{align*}
f(r) &= c_{1,s-1}\int_0^1\int_0^1\big(y-x+r\big)^{ 1-2s}\;dy\;dx \\
f'''(r) &= c_{1,s-1}( 1-2s)(-2s)(-1-2s)\int_0^1\int_0^1\big(y-x+r\big)^{-2-2s}\;dy\;dx<0
\end{align*}
so that
\begin{align*}
g''(r)=f''(r+1)-f''(r)<0
\qquad\text{for }r>1.
\end{align*}
Using twice a second order Taylor expansion centered at~$M$ with the remainder in Lagrange form, 
one deduces
\begin{align*}
\cE_s(u,u)-\cE_s(u^*,u^*)=g(1)-2g(M)+g(2M-1)=g''(\xi)(M-1)^2<0.
\end{align*}
\end{remark}

%\begin{align*}
%\lim_{s\downarrow 1} M^*(s)=+\infty
%\qquad\text{and}\qquad
%\lim_{s\uparrow 3/2} M^*(s)=1.
%\end{align*}
%Indeed, one has
%\begin{align*}
%& -\cE_{s-1}(\1_{(0,1)},\1_{(M,M+1)})+\cE_{s-1}(\1_{(0,1)},\1_{(M+1,M+2)}) \ =\\
%& =-c_{1,s-1}\int_0^1\int_M^{M+1}\frac{dy}{{(y-x)}^{2s-1}}\;dx
%+c_{1,s-1}\int_0^1\int_{M+1}^{M+2}\frac{dy}{{(y-x)}^{2s-1}}\;dx \\
%& =\frac{c_{1,s-1}}{2s-2}\int_0^1\Big(2(M+1-x)^{2-2s}-(M-x)^{2-2s}-(M+2-x)^{2-2s}\Big)\;dx \\
%& =\frac{c_{1,s-1}}{(2s-2)(3-2s)}\Big(2(M+1)^{3-2s}-2M^{3-2s}-M^{3-2s}+(M-1)^{3-2s}-(M+2)^{3-2s}+
%(M+1)^{3-2s}\Big) \\
%& =\frac{c_{1,s-1}}{(2s-2)(3-2s)}\Big(3(M+1)^{3-2s}-3M^{3-2s}+(M-1)^{3-2s}-(M+2)^{3-2s}\Big) 
%\end{align*}
%where
%\begin{align*}
%\frac1{3-2s}\frac{\partial}{\partial M}\Big(3(M+1)^{3-2s}-3M^{3-2s}+(M-1)^{3-2s}-(M+2)^{3-2s}\Big)\\
%=3(M+1)^{2-2s}-3M^{2-2s}+(M-1)^{2-2s}-(M+2)^{2-2s}
%\end{align*}

\begin{remark}
In view of \eqref{example1} and \eqref{example2}, it follows immediately that a P\'olya-Szeg\H o inequality or a reversed version of it cannot hold.
\end{remark}

\section{Analysis of the two-balls domain}

In this section, let~$D_1,D_2\subset\R^n$ be two open balls with~$D_1\cap D_2=\emptyset$ and~$\Omega=D_1\cup D_2$.
In this section we perform a spectral analysis of the operator~$\Ds$ on the space~$\cH^s_0(\Omega)$, by showing what announced in the Introduction, \textit{i.e.}, that the first eigenvalue is positive and simple, with an associated eigenfunction which is possibly sign-changing according to the value of~$s$: this is done in Subsection~\ref{sub:firsteigenfunction}. Moreover, we improve the analysis carried out in~\cite{ajs-maxprinc} and~\cite{ajs-repr} by giving sufficient conditions under which maximum principles are recovered on~$\Omega$: this is done in Subsection~\ref{sub:recovery}.

\subsection{The first eigenfunction}
\label{sub:firsteigenfunction}

\begin{lemma}
For $U\subset \R^n$ open and bounded, denote by
$$
\lambda(U):=\min_{\substack{u\in \cH^s_0(U)\\ u\neq 0}} \frac{\cE_s(u,u)}{\|u\|_{L^2(U)}^2}
$$
the first Dirichlet eigenvalue of $(-\Delta)^s$ in $U$. Then
$\lambda(U)>0$ and it is attained by some~$\varphi\in\cH^s_0(U)$.
\end{lemma}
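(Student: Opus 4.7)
The plan is to apply the direct method of the calculus of variations: show that the Rayleigh quotient is bounded below by a positive constant (which yields $\lambda(U) > 0$), then extract a minimizer by weak compactness in $\cH^s_0(U)$ together with compact embedding into $L^2(U)$.

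First I would establish the fractional Poincaré inequality on $\cH^s_0(U)$, namely that there exists $c = c(n,s,|U|) > 0$ such that $\|u\|_{L^2(\R^n)}^2 \leq c\,\cE_s(u,u)$ for every $u \in \cH^s_0(U)$. Using the Fourier characterization of $\cE_s$ in \eqref{energy-fourier} and Plancherel, I split
\[
\|u\|_{L^2}^2 = \int_{|\xi|\leq R}|\cF u(\xi)|^2\,d\xi + \int_{|\xi|>R}|\cF u(\xi)|^2\,d\xi \leq \omega_n R^n\,\|\cF u\|_{L^\infty(\R^n)}^2 + R^{-2s}\,\cE_s(u,u).
\]
Since $u$ is supported in $\overline{U}$, Hausdorff--Young and Cauchy--Schwarz give $\|\cF u\|_{L^\infty} \leq C_n\|u\|_{L^1} \leq C_n |U|^{1/2}\|u\|_{L^2}$. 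Choosing $R = R(n,|U|)$ so that $\omega_n R^n C_n^2 |U| = 1/2$ and absorbing, one obtains $\|u\|_{L^2}^2 \leq 2 R^{-2s}\cE_s(u,u)$, which implies $\lambda(U) \geq \tfrac{1}{2}R^{2s} > 0$.

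Next I would prove attainment. Take a minimizing sequence $\{u_k\} \subset \cH^s_0(U)$ normalized by $\|u_k\|_{L^2}=1$ and $\cE_s(u_k,u_k) \to \lambda(U)$. By the Poincaré estimate just established, $\{u_k\}$ is bounded in the Hilbert space $\cH^s_0(U)$ (equipped with the inner product $\cE_s(\cdot,\cdot)+\langle\cdot,\cdot\rangle_{L^2}$), so up to a subsequence $u_k \rightharpoonup \varphi$ weakly in $\cH^s_0(U)$. Since $\cH^s_0(U) \hookrightarrow H^s(B_R) \hookrightarrow L^2(B_R)$ is compact by the classical Rellich--Kondrachov theorem (with $U \subset B_R$), we have $u_k \to \varphi$ strongly in $L^2$, whence $\|\varphi\|_{L^2}=1$ and in particular $\varphi \not\equiv 0$. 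The quadratic form $\cE_s$ is weakly lower semicontinuous (being a non-negative quadratic form associated to a positive symmetric Fourier multiplier), so
\[
\cE_s(\varphi,\varphi) \leq \liminf_{k\to\infty}\cE_s(u_k,u_k) = \lambda(U).
\]
Combined with the definition of $\lambda(U)$ as an infimum, equality holds and $\varphi$ is a minimizer in $\cH^s_0(U)$.

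The step requiring most care is the compact embedding $\cH^s_0(U) \hookrightarrow L^2(U)$ for arbitrary $s > 0$; this is not entirely trivial in the higher-order fractional setting, but it follows by viewing $\cH^s_0(U)$ as a closed subspace of $H^s(B_R)$ (via zero extension) and invoking Rellich--Kondrachov on the ball. Everything else — Poincaré, weak convergence, and lower semicontinuity — is standard once one works in the Fourier picture provided by \eqref{energy-fourier}.
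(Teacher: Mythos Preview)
Your proposal is correct and is precisely the kind of standard argument the paper has in mind: the paper does not give a proof at all, writing only ``The proof of this lemma is standard and we omit it here.'' Your Fourier-based Poincar\'e inequality together with the direct method (weak compactness in $\cH^s_0(U)$, Rellich--Kondrachov, and weak lower semicontinuity of $\cE_s$) is a clean and complete realization of that omitted standard proof.
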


The proof of this lemma is standard and we omit it here.

In the following, we denote by~$\pi^{\pm}_i$,~$i=1,2$, the projections of~$\cH^s_0(D_i)$ onto their positive and negative cones, as defined and analysed in Appendix~\ref{sec:proj}.

\begin{proposition}\label{proj-vs-orig} 
Let~$\lfloor s\rfloor \in 2\N_0+1$.
The function~$\tilde\varphi=2\pi_1^+\varphi_1-\varphi_1+2\pi_2^-\varphi_2-\varphi_2$ satisfies
\begin{align}\label{better}
\left\|\tilde\varphi\right\|_{L^2(\Omega)}\ \geq\ \left\|\varphi\right\|_{L^2(\Omega)},\qquad
\cE_s(\tilde\varphi,\tilde\varphi)\ \leq\ \cE_s(\varphi,\varphi).
\end{align}
If instead~$\lfloor s\rfloor \in 2\N_0$, then~$\tilde\varphi=2\pi_1^+\varphi_1-\varphi_1+2\pi_2^+\varphi_2-\varphi_2$
satisfies~\eqref{better}.
\end{proposition}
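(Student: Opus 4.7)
The plan is to decompose $\varphi$ into its parts on each ball and exploit three ingredients coming from Appendix~\ref{sec:proj}: the $\cE_s$-orthogonality of the cone projections, namely $\cE_s(\pi_i^\pm\varphi_i - \varphi_i,\pi_i^\pm\varphi_i) = 0$; the pointwise bounds $\pi_i^+ v \geq v^+ \geq 0$ and $\pi_i^- v \leq -v^- \leq 0$; and the disjoint-support formula~\eqref{disjointsupport}. Setting $w_1 := \pi_1^+\varphi_1$ together with $w_2 := \pi_2^-\varphi_2$ when $\lfloor s\rfloor$ is odd and $w_2 := \pi_2^+\varphi_2$ when $\lfloor s\rfloor$ is even, I write $\tilde\varphi = \tilde\varphi_1 + \tilde\varphi_2$ with $\tilde\varphi_i := 2w_i - \varphi_i$; since $D_1 \cap D_2 = \emptyset$, both $\{\varphi_1,\varphi_2\}$ and $\{\tilde\varphi_1,\tilde\varphi_2\}$ have pairwise disjoint supports.

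For the on-ball contributions I expand
\begin{equation*}
\cE_s(\tilde\varphi_i,\tilde\varphi_i) = 4\cE_s(w_i,w_i) - 4\cE_s(w_i,\varphi_i) + \cE_s(\varphi_i,\varphi_i) = \cE_s(\varphi_i,\varphi_i),
\end{equation*}
where the last equality uses the projection orthogonality. For the $L^2$-norm,
\begin{equation*}
\|\tilde\varphi_i\|_{L^2(D_i)}^2 - \|\varphi_i\|_{L^2(D_i)}^2 = 4\int_{D_i} w_i(w_i - \varphi_i)\,dx \geq 0,
\end{equation*}
since the pointwise bounds force $w_i$ and $w_i - \varphi_i$ to have the same sign a.e.\ on $D_i$ (both nonnegative when $w_i = \pi_i^+\varphi_i$, both nonpositive when $w_i = \pi_i^-\varphi_i$). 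Summing over $i$ gives the $L^2$-inequality in~\eqref{better} immediately.

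The delicate point is the cross term, where the parity of $\lfloor s\rfloor$ enters. By~\eqref{disjointsupport},
\begin{equation*}
\cE_s(\tilde\varphi_1,\tilde\varphi_2) - \cE_s(\varphi_1,\varphi_2) = (-1)^{\lfloor s\rfloor+1}\,\frac{c_{n,s}}{2}\int_{D_1}\int_{D_2}\frac{\tilde\varphi_1(x)\tilde\varphi_2(y) - \varphi_1(x)\varphi_2(y)}{|x-y|^{n+2s}}\,dx\,dy,
\end{equation*}
and a direct algebraic manipulation gives
\begin{equation*}
\tilde\varphi_1(x)\tilde\varphi_2(y) - \varphi_1(x)\varphi_2(y) = 2\bigl[w_1(x)\bigl(w_2(y) - \varphi_2(y)\bigr) + w_2(y)\bigl(w_1(x) - \varphi_1(x)\bigr)\bigr].
\end{equation*}
A parity check then closes the argument: if $\lfloor s\rfloor$ is odd, then $w_1,w_1-\varphi_1 \geq 0$ while $w_2,w_2-\varphi_2 \leq 0$, so the bracket is $\leq 0$ and the prefactor equals $+1$; if $\lfloor s\rfloor$ is even, all four of $w_i, w_i-\varphi_i$ are $\geq 0$, the bracket is $\geq 0$, and the prefactor equals $-1$. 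In either case $\cE_s(\tilde\varphi_1,\tilde\varphi_2) \leq \cE_s(\varphi_1,\varphi_2)$.

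Combining this with $\cE_s(\tilde\varphi,\tilde\varphi) = \sum_i \cE_s(\tilde\varphi_i,\tilde\varphi_i) + 2\cE_s(\tilde\varphi_1,\tilde\varphi_2)$ and the on-ball identity above concludes the proof. The hard part is precisely this parity-matching in the cross term: swapping $\pi_2^-$ for $\pi_2^+$ between the two cases is exactly what flips the sign of the bracket at the same moment when the disjoint-support prefactor $(-1)^{\lfloor s\rfloor+1}$ flips, so that the kernel-weighted integrand always has the good sign.
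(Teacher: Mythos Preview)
Your proof is correct and follows the same route as the paper: split~$\tilde\varphi$ into its two on-ball pieces, handle the diagonal terms via the projection identity~\eqref{eq:test0} (the paper packages this as~\eqref{eq:downward2}, which is in fact an equality), and control the cross term using~\eqref{disjointsupport} together with the signs of~$w_i$ and~$w_i-\varphi_i$. One small caveat on attribution: the pointwise bounds~$\pi_i^+\varphi_i\geq\varphi_i$ and~$\pi_i^-\varphi_i\leq\varphi_i$ do not come from Appendix~\ref{sec:proj} alone---\eqref{eq:ppp} only says that~$\pi_i^\pm\varphi_i-\varphi_i$ is a weak super/subsolution on~$D_i$, and turning this into a pointwise sign requires the maximum principle on balls~\cite{ajs-boggio}*{Theorem~1.1}, exactly as the paper invokes it.
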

\begin{proof}
Let us suppose~$\lfloor s\rfloor \in 2\N_0+1$, the case~$\lfloor s\rfloor \in 2\N_0$ will follow under suitable minor modifications.

By the maximum principle on balls~\cite{ajs-boggio}*{Theorem~1.1} and~\eqref{eq:ppp}, it holds 
$\pi_1^+\varphi_1\geq\varphi_1$ and~$\pi_2^-\varphi_2\leq\varphi_2$. One has
\[
|\varphi|\leq|\varphi_1-\pi_1^+\varphi_1+\pi_1^+\varphi_1|+|\varphi_2-\pi_2^-\varphi_2+\pi_2^-\varphi_2|
= 2\pi_1^+\varphi_1-\varphi_1-2\pi_2^-\varphi_2+\varphi_2=|\tilde\varphi|
\]
and this proves the claim about the~$L^2$ norm. Also,
\begin{align}
\cE_s(\tilde\varphi,\tilde\varphi)
&=\cE_s(2\pi_1^+\varphi_1-\varphi_1+2\pi_2^-\varphi_2-\varphi_2,2\pi_1^+\varphi_1-\varphi_1+2\pi_2^-\varphi_2-\varphi_2)  \nonumber \\ 
& =\cE_s(2\pi_1^+\varphi_1-\varphi_1,2\pi_1^+\varphi_1-\varphi_1)+\cE_s(2\pi_2^-\varphi_2-\varphi_2,2\pi_2^-\varphi_2-\varphi_2) \nonumber \\
&\qquad +2\cE_s(2\pi_1^+\varphi_1-\varphi_1,2\pi_2^-\varphi_2-\varphi_2)  \nonumber \\
& \leq \cE_s(\varphi_1,\varphi_1)+\cE_s(\varphi_2,\varphi_2)-2\cE_s(\varphi_1,\varphi_2) \nonumber \\
&\qquad +4\cE_s(\pi^+_1\varphi_1-\varphi_1,\pi^-_2\varphi_2)+4\cE_s(\pi^+_1\varphi_1,\pi^-_2\varphi_2-\varphi_2) \label{8632263} \\
& \leq \cE_s(\varphi,\varphi)  \nonumber
\end{align}
where we have used~\eqref{eq:downward2} and the inner product of functions with segregated supports~\eqref{disjointsupport}.
\end{proof}

\begin{lemma}\label{lem:nontriv} 
$\lambda(\Omega)<\min\{\lambda(D_1),\lambda(D_2)\}$.
As a consequence,~$\varphi\not\equiv0$ in~$D_1$ and~$\varphi\not\equiv0$ in~$D_2$.
\end{lemma}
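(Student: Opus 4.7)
The plan is to exhibit a single test function in $\cH^s_0(\Omega)$ whose Rayleigh quotient lies strictly below $\min\{\lambda(D_1),\lambda(D_2)\}$; then the inequality for $\lambda(\Omega)$ follows, and the nontriviality claim is an immediate corollary.

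For each $i=1,2$, let $\varphi_i\in\cH^s_0(D_i)$ be a first Dirichlet eigenfunction on the ball $D_i$, extended by zero outside. Since $D_i$ is a ball, the maximum principle on balls (\cite{ajs-boggio}*{Theorem~1.1}) allows us to choose $\varphi_i>0$ in $D_i$. Being compactly supported in $\Omega$, both functions lie in $\cH^s_0(\Omega)$. I would then test the Rayleigh quotient at $u=\varphi_1+\varepsilon\varphi_2$. Because $\supp\varphi_1$ and $\supp\varphi_2$ are disjoint, $\|u\|_{L^2}^2=\|\varphi_1\|_{L^2}^2+\varepsilon^2\|\varphi_2\|_{L^2}^2$, and expanding the bilinear form gives
\[
\cE_s(u,u)=\lambda(D_1)\|\varphi_1\|_{L^2}^2+2\varepsilon\,\cE_s(\varphi_1,\varphi_2)+\varepsilon^2\lambda(D_2)\|\varphi_2\|_{L^2}^2.
\]
A first-order expansion in $\varepsilon$ yields
\[
\frac{\cE_s(u,u)}{\|u\|_{L^2}^2}=\lambda(D_1)+\frac{2\varepsilon\,\cE_s(\varphi_1,\varphi_2)}{\|\varphi_1\|_{L^2}^2}+O(\varepsilon^2).
\]

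The decisive step is showing the cross term does not vanish. Here I would invoke the disjoint-support identity \eqref{disjointsupport}, which gives
\[
\cE_s(\varphi_1,\varphi_2)={(-1)}^{\intgr{s}+1}\frac{c_{n,s}}{2}\int_{D_1}\int_{D_2}\frac{\varphi_1(x)\varphi_2(y)}{{|x-y|}^{n+2s}}\;dx\;dy,
\]
and the positivity of $\varphi_1,\varphi_2$ on their respective balls forces the integral to be strictly positive. Choosing $\varepsilon$ of the opposite sign to $\cE_s(\varphi_1,\varphi_2)$, and sufficiently small so that the $O(\varepsilon^2)$ remainder is dominated, produces a competitor whose Rayleigh quotient is strictly less than $\lambda(D_1)$. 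Hence $\lambda(\Omega)<\lambda(D_1)$; interchanging the roles of the balls gives $\lambda(\Omega)<\lambda(D_2)$.

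For the consequence, suppose toward contradiction that $\varphi\equiv 0$ on $D_2$. Since $\varphi$ also vanishes outside $\Omega$, we actually have $\varphi\in\cH^s_0(D_1)$; but then $\varphi$ is admissible in the variational problem defining $\lambda(D_1)$, so $\lambda(D_1)\leq\cE_s(\varphi,\varphi)/\|\varphi\|_{L^2}^2=\lambda(\Omega)$, contradicting the strict inequality just established. The symmetric argument rules out $\varphi\equiv 0$ on $D_1$.

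The only non-routine point is the nonvanishing of $\cE_s(\varphi_1,\varphi_2)$: this relies on the strict positivity of first eigenfunctions on a single ball, which for all $s>0$ is supplied by the Boggio-type maximum principle of \cite{ajs-boggio}. Once that is in hand, everything else is a one-line variational perturbation.
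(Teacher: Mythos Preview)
Your proof is correct and follows essentially the same approach as the paper's: both test with a linear combination of the positive first eigenfunctions on $D_1$ and $D_2$ and exploit the nonvanishing cross term $\cE_s(\varphi_1,\varphi_2)$ via the disjoint-support identity~\eqref{disjointsupport}. The only cosmetic difference is that the paper normalizes the test function as $\sqrt{1-\alpha^2}\,u_1+(-1)^{\intgr{s}}\alpha u_2$ (fixing the sign of the perturbation in advance) and reads off the energy directly, whereas you Taylor-expand the Rayleigh quotient of $\varphi_1+\varepsilon\varphi_2$ and choose the sign of $\varepsilon$ afterwards; the consequence part is identical.
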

\begin{proof}
Suppose, without loss of generality, that~$\lambda(D_1)\leq\lambda(D_2)$.
Denote by~$u_1\in\cH^s_0(D_1)$ and~$u_2\in\cH^s(D_2)$ the normalized (positive) 
eigenfunctions associated to~$\Ds$
on~$D_1$ and~$D_2$ respectively. For any~$\alpha\in[0,1]$:
\begin{multline*}
\cE_s\Big(\sqrt{1-\alpha^2}\,u_1+(-1)^{\intgr s}\alpha u_2,\sqrt{1-\alpha^2}\,u_1+(-1)^{\intgr s}\alpha u_2\Big)=\\
=\big(1-\alpha^2\big)\lambda(D_1)+\alpha^2\lambda(D_2)+2(-1)^{\intgr s}\alpha\sqrt{1-\alpha^2}\,\cE_s(u_1,u_2).
\end{multline*}
When~$\alpha>0$ is very small, one has~$\alpha^2\lambda(D_2)+2(-1)^{\intgr s}\alpha\sqrt{1-\alpha^2}\,\cE_s(u_1,u_2)<0$
and therefore there is a suitable choice~$\alpha_*\in(0,1)$ such that
\begin{align*}
\cE_s\Big(\sqrt{1-\alpha_*^2}\,u_1-\alpha_*u_2,\sqrt{1-\alpha_*^2}\,u_1-\alpha_*u_2\Big)
<\lambda(D_1)\leq\lambda(D_2)
\end{align*}
which is proving the first claim.

If it were~$\varphi\equiv 0$ in~$D_2$ then it would also hold
\begin{align*}
\varphi\in\cH^s_0(D_1),\qquad\cE_s(\varphi,\varphi)=\lambda(\Omega)<\lambda(D_1),
\end{align*}
which contradicts the definition of~$\lambda(D_1)$.
\end{proof}

\begin{theorem}\label{thm:first-twoball}
$\lambda(\Omega)$ is simple.

If~$\lfloor s\rfloor\in 2\N_0+1$, then~$\varphi$ is of one sign on~$D_1$ and of the opposite one in~$D_2$.

If instead~$\lfloor s\rfloor\in 2\N_0$, then~$\varphi$ is of constant sign in~$\Omega$.
\end{theorem}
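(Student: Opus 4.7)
\emph{Plan of proof.} My plan is to exploit Proposition~\ref{proj-vs-orig} as a sign-polishing tool, then upgrade to strict signs via the strong maximum principle on balls, and finally prove simplicity by the usual convex-combination argument. I would treat the case $\intgr{s}\in 2\N_0+1$ in detail; the case $\intgr{s}\in 2\N_0$ is identical after replacing $\pi_2^-$ with $\pi_2^+$ throughout. First I would fix a minimizer $\varphi\in\cH^s_0(\Omega)$ of the Rayleigh quotient and write $\varphi=\varphi_1+\varphi_2$ with $\varphi_i=\varphi\1_{D_i}$, noting that both $\varphi_1$ and $\varphi_2$ are nontrivial by Lemma~\ref{lem:nontriv}.

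Next, building $\tilde\varphi=2\pi_1^+\varphi_1-\varphi_1+2\pi_2^-\varphi_2-\varphi_2$ as in Proposition~\ref{proj-vs-orig} and combining~\eqref{better} with the variational characterisation of $\lambda(\Omega)$, I would obtain
\begin{align*}
\lambda(\Omega)=\frac{\cE_s(\varphi,\varphi)}{\|\varphi\|_{L^2}^2}\ \geq\ \frac{\cE_s(\tilde\varphi,\tilde\varphi)}{\|\tilde\varphi\|_{L^2}^2}\ \geq\ \lambda(\Omega),
\end{align*}
so that equality holds everywhere, $\tilde\varphi$ is itself a minimizer, and in particular the two uses of~\eqref{eq:downward2} in the proof of Proposition~\ref{proj-vs-orig} are equalities. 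Provided \eqref{eq:downward2} is strict unless its argument already belongs to the relevant positive/negative cone---a standard feature of projections that I would take from the appendix---this forces $\pi_1^+\varphi_1=\varphi_1$ and $\pi_2^-\varphi_2=\varphi_2$, equivalently $\varphi_1\geq 0$ in $D_1$ and $\varphi_2\leq 0$ in $D_2$. Testing the eigenvalue equation against $v\in\cH^s_0(D_1)$ with $v\geq 0$ I would then get
\begin{align*}
\cE_s(\varphi_1,v)=\lambda(\Omega)\int_{D_1}\varphi_1\, v-\cE_s(\varphi_2,v);
\end{align*}
since $\intgr{s}\in 2\N_0+1$ and $\varphi_2\leq 0$, formula~\eqref{disjointsupport} shows that the last term is nonnegative, so $\Ds\varphi_1\geq 0$ variationally in the ball $D_1$, and the maximum principle~\cite{ajs-boggio}*{Theorem~1.1} upgrades this to $\varphi_1>0$ in $D_1$; symmetrically $\varphi_2<0$ in $D_2$.

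Simplicity would then follow by the convex-combination trick. Given two minimizers $\varphi,\psi$, the strict positivity just established makes $c:=\psi(x_0)/\varphi(x_0)$ well defined at any $x_0\in D_1$, and $w:=\psi-c\varphi$ is an eigenfunction for $\lambda(\Omega)$ on $\Omega$ that vanishes at $x_0$. Applying the previous analysis to $w$, either $w|_{D_1}\equiv 0$, in which case $w\in\cH^s_0(D_2)$ is an eigenfunction of $\Ds$ on the single ball $D_2$ with eigenvalue $\lambda(\Omega)<\lambda(D_2)$, contradicting Lemma~\ref{lem:nontriv}; or $w|_{D_1}$ is strictly of one sign, contradicting $w(x_0)=0$. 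Either way $w\equiv 0$ and $\psi=c\varphi$. The hardest step I expect will be making the strict form of~\eqref{eq:downward2} explicit enough to detect the sign of $\varphi_i$ from the equality case, since the conclusion of the theorem ultimately rests on a precise understanding of the positive-cone projections $\pi_i^\pm$ introduced in the appendix.
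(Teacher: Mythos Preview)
Your argument has a genuine gap at the key step where you extract the sign of $\varphi_i$ from the equality case of Proposition~\ref{proj-vs-orig}. You want to conclude that ``the two uses of~\eqref{eq:downward2} \ldots\ are equalities'' and then infer $\pi_1^+\varphi_1=\varphi_1$, $\pi_2^-\varphi_2=\varphi_2$ from a strict form of~\eqref{eq:downward2}. But~\eqref{eq:downward2} is \emph{always} an equality: by the computation in the appendix,
\[
\cE_s(2\pi^+w-w,2\pi^+w-w)=\cE_s(w,w)+4\,\cE_s(\pi^+w-w,\pi^+w),
\]
and the last term vanishes identically by~\eqref{eq:test0}. So equality in~\eqref{eq:downward2} carries no information whatsoever about whether $w$ lies in the positive cone, and your proviso ``provided~\eqref{eq:downward2} is strict unless\ldots'' is simply false.

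The paper's proof locates the actual source of inequality elsewhere: in the \emph{cross} term $2\cE_s(2\pi_1^+\varphi_1-\varphi_1,2\pi_2^-\varphi_2-\varphi_2)$, which after expansion produces the two interaction terms in~\eqref{8632263}, namely $\cE_s(\pi_1^+\varphi_1-\varphi_1,\pi_2^-\varphi_2)$ and $\cE_s(\pi_1^+\varphi_1,\pi_2^-\varphi_2-\varphi_2)$. These are each nonpositive by~\eqref{disjointsupport} (using $\intgr{s}$ odd and the signs $\pi_1^+\varphi_1-\varphi_1\geq 0$, $\pi_2^-\varphi_2\leq 0$, etc.), and equality forces both integrals to vanish. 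Since the kernel $|x-y|^{-n-2s}$ is strictly positive on $D_1\times D_2$, this yields the trichotomy $(\pi_1^+\varphi_1=0$ and $\pi_2^-\varphi_2=0)$, or $(\pi_1^+\varphi_1=\varphi_1$ and $\pi_2^-\varphi_2=\varphi_2)$, or one of $\varphi_1,\varphi_2$ vanishes; the last is excluded by Lemma~\ref{lem:nontriv}. Note in particular that the conclusion is a \emph{dichotomy} (either $\varphi_1\geq 0,\varphi_2\leq 0$ or $\varphi_1\leq 0,\varphi_2\geq 0$), not the single alternative you wrote down. Your subsequent steps---the upgrade to strict sign via the eigenvalue equation and the maximum principle on balls, and the simplicity argument via a linear combination vanishing at a point---are fine and essentially equivalent to the paper's continuity argument with $w_t=t\varphi+\sqrt{1-t^2}\psi$; but the sign step needs to be redone using the cross terms rather than~\eqref{eq:downward2}.
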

\begin{proof}
Let us suppose~$\lfloor s\rfloor \in 2\N_0+1$, the case~$\lfloor s\rfloor \in 2\N_0$ will follow after suitable minor modifications.

In the notations of Proposition~\ref{proj-vs-orig}, we already know that 
\begin{align*}
\cE_s(\tilde\varphi,\tilde\varphi)\leq\cE_s(\varphi,\varphi)
\end{align*}
and, in view of~\eqref{8632263}, equality can hold only if
\[
\cE_s(\pi_1^+\varphi_1,\pi_2^-\varphi_2-\varphi_2)+\cE_s(\pi_1^+\varphi_1-\varphi_1,\pi_2^-\varphi_2)=0.
\]
This last condition in turn holds if and only if one of the following is true:
\begin{enumerate}[label=\it\roman*)]
\item ~$\pi_1^+\varphi_1=0$ and~$\pi_2^-\varphi_2=0$, 
which means that~$\varphi_1\leq 0$ and~$\varphi_2\geq 0$;
\item ~$\pi_2^-\varphi_2=\varphi_2$ and~$\pi_1^+\varphi_1=\varphi_1$, 
which means that~$\varphi_2\leq 0$ and~$\varphi_1\geq 0$;
\item ~$\varphi_1=0$ or~$\varphi_2=0$, but in this case the minimizing condition would be broken, 
see Lemma~\ref{lem:nontriv}. 
\end{enumerate}
This proves the claim about the sign of the eigenfunction.

Suppose now that we have two minimizers~$\varphi$ and~$\psi$. 
Without loss of generality (by the previous claim, they are necessarily of one sign on each ball) 
we can suppose that~$\varphi\psi\leq 0$ in~$\Omega$.
Define, for any~$t\in[0,1]$,~$w_t:=t\varphi+\sqrt{1-t^2}\psi$. 
One can verify that~$w_t$ is either still a minimizer or trivial.
In the first case,~$w_t$ must be of one sign on each ball and, 
since~$w_1=\varphi$ and~$w_0=\psi$ are of opposite sign
throughout~$\Omega$, there must exist a~$t_1\in(0,1)$ such that~$w_{t_1}=0$ in~$D_1$. 
This feature is not compatible with minimality (compare with {\it iii)} above) 
so we must conclude~$w_{t_1}=0$ in~$\Omega$.
Thus~$\varphi$ and~$\psi$ are not linearly independent.
\end{proof}

\subsection{Partial recovery of maximum principles}
\label{sub:recovery}

%\begin{lemma}
%$\mu(\Omega)>\lambda(\Omega)>0$ 
%and it is attained by some~$\psi\in\cH^s_0(\Omega)$.
%\end{lemma}
%
%\begin{theorem}\label{th:second}
%Suppose~$|D_1|=|D_2|=\omega_n$ and~$\dist(D_1,D_2)\geq 2$.
%Then~$\mu(\Omega)$ is simple.
%
%If~$\lfloor s\rfloor\in2\N_0+1$ then~$\psi>0$ in~$\Omega$; moreover,~$\psi$ is even
%in the orthogonal direction to the median hyperplane between the two balls.
%
%If instead~$\lfloor s\rfloor\in\N_0$ then~$\psi$ is of one sign in~$D_1$ and of the opposite one in~$D_2$; moreover,~$\psi$ is odd in the orthogonal direction to the median hyperplane between the two balls.
%\end{theorem}
%
%Theorem~\ref{th:second} is obtained as consequence of the following proposition.

\begin{proposition}\label{part-max-princ-prelim}
Suppose~$|D_1|=|D_2|=\omega_n$ and~$\dist(D_1,D_2)\geq 2$.
Denote by~$\tau:\R^n\to\R^n$ the inversion of~$\R^n$ such that~$\tau(D_1)=D_2$.
Let~$g\in L^2(D_1),\ g\geq 0,$ and~$f\in L^2(\Omega),\ f=g-(-1)^{\lfloor s\rfloor}g\circ\tau$;
then the weak solution~$u\in\cH^s_0(\Omega)$ of 
\begin{align*}
\Ds u=f\quad\text{ in }\ \Omega
\end{align*}
is of the form~$u=v-(-1)^{\lfloor s\rfloor}v\circ\tau,\ v\in\cH^s_0(D_1)$, with~$v\geq 0$ in~$D_1$.
\end{proposition}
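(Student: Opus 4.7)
My plan has three main steps.

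First, existence and uniqueness of $u$ follow from Lax--Milgram, as $\cE_s$ is coercive on $\cH^s_0(\Omega)$. Then, a direct computation yields $f\circ\tau=(-1)^{\lfloor s\rfloor+1}f$, and since $\tau$ is a Euclidean isometry preserving $\Omega$, the function $(-1)^{\lfloor s\rfloor+1}u\circ\tau$ belongs to $\cH^s_0(\Omega)$ and solves $\Ds w=f$ as well; uniqueness then forces $u=(-1)^{\lfloor s\rfloor+1}u\circ\tau$. Setting $v:=u\,\1_{D_1}\in\cH^s_0(D_1)$, this is exactly $u=v-(-1)^{\lfloor s\rfloor}v\circ\tau$.

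Second, I would reduce the equation to one on $v$ alone. Testing $\cE_s(u,\psi)=\int f\psi$ against $\psi=\varphi-(-1)^{\lfloor s\rfloor}\varphi\circ\tau$ for $\varphi\in\cH^s_0(D_1)$, and using~\eqref{disjointsupport} together with the reflection invariance of $\cE_s$ and the change of variable $y\mapsto\tau y$, all the signs $(-1)^{\lfloor s\rfloor}$ collapse and one is left with
\[
\cE_s(v,\varphi)+\frac{c_{n,s}}{2}\iint_{D_1\times D_1}\frac{v(x)\,\varphi(y)}{|x-\tau y|^{n+2s}}\,dx\,dy=\int_{D_1}g\,\varphi,\qquad\varphi\in\cH^s_0(D_1).
\]
In operator form, $v$ solves $(\Ds+K)v=g$ weakly in $D_1$, where $K$ is the integral operator with the nonnegative kernel $c_{n,s}/|x-\tau y|^{n+2s}$, bounded above by $c_{n,s}/2^{n+2s}$ thanks to $\dist(D_1,D_2)\geq 2$.

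Third---and this is where the main obstacle lies---I would show $v\geq 0$. Using the Green function $G_\Omega$ of $\Ds$ on $\Omega$, one has the representation $v(x)=\int_{D_1}\bigl[G_\Omega(x,y)-(-1)^{\lfloor s\rfloor}G_\Omega(x,\tau y)\bigr]\,g(y)\,dy$, reducing the positivity of $v$ (for arbitrary $g\geq 0$) to the pointwise Green-function comparison
\[
G_\Omega(x,y)\geq(-1)^{\lfloor s\rfloor}\,G_\Omega(x,\tau y)\qquad\text{for all }x,y\in D_1.
\]
The assumptions $\diam(D_i)=2$ and $\dist(D_1,D_2)\geq 2$ yield the geometric input $|x-y|\leq 2\leq|x-\tau y|$ for $x,y\in D_1$---that is, ``same-ball'' pairs are no more separated than their ``cross-ball'' reflected counterparts---which is the sharp quantitative hypothesis behind the comparison. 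Establishing the latter is the technical heart, and would combine the sign analysis of $G_\Omega$ carried out in \cite{ajs-repr}*{Theorem~1.10} (which identifies $G_\Omega\geq 0$ when $\lfloor s\rfloor$ is even, and gives the corresponding sign structure when $\lfloor s\rfloor$ is odd) with Boggio's positivity for $\Ds$ on the ball (\cite{ajs-boggio}*{Theorem~1.1}).
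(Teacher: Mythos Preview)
Your reduction to the Green-function comparison $G_\Omega(x,y)\geq(-1)^{\lfloor s\rfloor}G_\Omega(x,\tau y)$ for $x,y\in D_1$ is exactly right, and your symmetry argument via uniqueness is clean (the paper jumps straight to the Green representation, but the conclusion is the same). Your second step---rewriting the problem as $(\Ds+K)v=g$ on $D_1$---is a correct identity but you never use it; you abandon it for the Green-function route.

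The gap is in how you propose to prove the comparison. Invoking \cite{ajs-repr}*{Theorem~1.10} together with Boggio does not suffice: that result gives \emph{sign} information on $G_\Omega$, whereas the inequality you need is a \emph{quantitative} domination of the cross-ball term by the same-ball one. When $\lfloor s\rfloor$ is even, knowing $G_\Omega\geq 0$ says nothing about whether $G_\Omega(x,y)\geq G_\Omega(x,\tau y)$; when $\lfloor s\rfloor$ is odd, you need $G_\Omega(x,y)+G_\Omega(x,\tau y)\geq 0$, again a magnitude statement not a sign statement. The paper's argument is substantially more hands-on. It uses the Poisson-kernel representation of $G_\Omega$ from \cite{ajs-repr}*{Proposition~3.4} to write $w_x(y):=G_\Omega(x,y)-(-1)^{\lfloor s\rfloor}G_\Omega(x,\tau y)$ as $G_{D_1}(x,y)$ plus an integral of $w_x$ against $\Gamma_{D_1}$; after one iteration the task splits into (i) the explicit pointwise inequality
\[
G_{D_1}(x,y)+\int_{D_2}\Gamma_{D_1}(y,z)\,G_{D_1}\big(x,\tau(z)\big)\,dz\geq 0,\qquad x,y\in D_1,
\]
which is proved by inserting Boggio's formula for $G_{D_1}$, the explicit Poisson kernel $\Gamma_{D_1}$, and the torsion function of the unit ball---here the assumption $\dist(D_1,D_2)\geq 2$ is used to bound $|y-z|\geq 2$ and the estimate closes via a Beta-function inequality---and (ii) a bootstrap of the residual integral term, carried out as in~\cite{ajs-repr}. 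Your geometric remark $|x-y|\leq 2\leq|x-\tau y|$ is in the right spirit, but it is not by itself the mechanism; the distance hypothesis enters through the explicit kernels, not through a soft monotonicity.
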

\begin{proof}
The solution~$u\in\cH^s_0(\Omega)$ is given by the Green representation
\[
u(x)=\int_\Omega G_\Omega(x,y)\,f(y)\;dy,
\qquad x\in\Omega.
\]
Using the symmetry of~$f$,~$u$ can be alternatively written
\begin{align*}
u(x)=\int_{D_1}\big[G_\Omega(x,y)-(-1)^{\lfloor s\rfloor}G_\Omega(x,\tau(y))\big]g(y)\;dy ,
\qquad x\in\Omega.
\end{align*}
We claim that
\begin{equation}\label{claim-green}
G_\Omega(x,y)-(-1)^{\lfloor s\rfloor}G_\Omega(x,\tau(y))\ \geq\ 0,\qquad x,y\in D_1.
\end{equation}
The Green function~$G_\Omega(x,\cdot)$ can be represented, for~$x\in D_1$,
by making use of the Poisson kernel (see~\cite{ajs-repr}*{Proposition 3.4}) as
\begin{align*}%\label{green-repr}
G_\Omega(x,y)\ =\ \left\lbrace\begin{aligned}
& G_{D_1}(x,y)+\int_{D_2}\Gamma_{D_1}(y,z)\,G_\Omega(x,z)\;dz & y\in D_1, \\
& \int_{D_1}\Gamma_{D_2}(y,z)\,G_\Omega(x,z)\;dz
=\int_{D_2}\Gamma_{D_1}(\tau(y),z)\,G_\Omega(x,\tau(z))\;dz & y\in D_2,
\end{aligned}\right.
\end{align*}
and thus
\begin{align*}
& G_\Omega(x,y)-(-1)^{\lfloor s\rfloor}G_\Omega(x,\tau(y))= \\
& =G_{D_1}(x,y)+\int_{D_2}\Gamma_{D_1}(y,z)\,G_\Omega(x,z)\;dz
-(-1)^{\lfloor s\rfloor}\int_{D_2}\Gamma_{D_1}(y,z)\,G_\Omega(x,\tau(z))\;dz
\qquad x,y\in D_1.
\end{align*}
Denoting by~$w_x(y):=G_\Omega(x,y)-(-1)^{\lfloor s\rfloor}G_\Omega(x,\tau(y)),\ x\in D_1,\ y\in\Omega$, we have for~$y\in D_1$
\begin{multline*}
w_x(y)= G_{D_1}(x,y)+\int_{D_2}\Gamma_{D_1}(y,z)\,w_x(z)\;dz= \\
= G_{D_1}(x,y)+\int_{D_2}\Gamma_{D_1}(y,z)\,G_{D_1}(x,\tau(z))\;dz 
+\int_{D_2}\bigg(\int_{D_2}\Gamma_{D_1}(y,z)\Gamma_{D_1}(z,\xi)\;dz\bigg)w_x(\xi)\;d\xi.
\end{multline*}
The rest of the proof consists of two different steps:
the first one is proving
\begin{equation}\label{claim-green2}
G_{D_1}(x,y)+\int_{D_2}\Gamma_{D_1}(y,z)\,G_{D_1}(x,\tau(z))\;dz \geq 0
\qquad x,y\in D_1,
\end{equation}
when~$\dist(D_1,D_2)\geq 2$;
once this done, the second one is bootstrapping the resulting inequality
\begin{equation}\label{claim-green3}
w_x(y) \geq \int_{D_2}\bigg(\int_{D_2}\Gamma_{D_1}(y,z)\Gamma_{D_1}(z,\xi)\;dz\bigg)w_x(\xi)\;d\xi
\end{equation}
to get~\eqref{claim-green} in the end, as desired.

As to~\eqref{claim-green2}, we use the explicit expression of~$G_{D_1}$
known as {\it Boggio's formula} (see~\cite{ajs-boggio}*{Theorem 1.1},~\cite{dg}*{Theorem 1}, or also~\cite{dkk}*{Remark 1}),
\begin{align*}
G_{D_1}(x,y)\ &=\ k_{n,s}\frac{\big(1-|x|^2\big)^s\big(1-|y|^2\big)^s}{{|x-y|}^{n}}\int_0^1
\frac{\eta^{s-1}}{\big(\rho(x,y)\eta+1\big)^{n/2}}\;d\eta, \qquad x,y\in D_1, \\
\rho(x,y)&=\frac{\left(1-|x|^2\right)\left(1-|y|^2\right)}{{|x-y|}^2}, \\
k_{n,s}&=\frac{\Gamma(n/2)}{2^{2s}\pi^{n/2}\Gamma(s)^2},
\end{align*}
and the explicit formula for the Poisson kernel~$\Gamma_{D_1}$ (see~\cite{ajs-repr}*{Theorem 1.1})
\begin{align*}
\Gamma_{D_1}(y,z)&=\frac{(-1)^{\lfloor s\rfloor}\gamma_{n,s}}{{|y-z|}^n}\bigg(\frac{1-|y|^2}{|z|^2-1}\bigg)^s,
\qquad y\in D_1,\ z\in\R^n\setminus\overline D_1, \\
\gamma_{n,s}&=\frac{\Gamma(n/2)}{\pi^{n/2}\Gamma(s-\lfloor s\rfloor)\,\Gamma(1-s+\lfloor s\rfloor)}.
\end{align*}
In order to get~\eqref{claim-green2} we estimate
\begin{multline*}
k_{n,s}\,\frac{\big(1-|x|^2\big)^s}{{|x-y|}^{n}}\int_0^1
\frac{\eta^{s-1}}{\big(\rho(x,y)\eta+1\big)^{n/2}}\;d\eta
-\gamma_{n,s}\int_{D_2}\frac{G_{D_1}(x,\tau(z))}{{|y-z|}^n{(|z|^2-1)}^s}\;dz\geq\\
\geq \frac{k_{n,s}}s\frac{\big(1-|x|^2\big)^s}{\big((1-|x|^2)(1-|y|^2)+{|x-y|}^2\big)^{n/2}}
-\frac{\gamma_{n,s}}{2^n}\int_{D_2}G_{D_1}(x,\tau(z))\;dz 
\end{multline*}
where we have used~$\dist(D_1,D_2)\geq 2$. Next,
\begin{align*}
\int_{D_2}G_{D_1}(x,\tau(z))\;dz=\int_{D_1}G_{D_1}(x,z)\;dz
\end{align*}
gives rise to the torsion function of~$D_1$ (see~\cite{MR3294242}*{Lemma 2.1})
\begin{align*}
\int_{D_1}G_{D_1}(x,z)\;dz=
\frac{\Gamma(n/2)}{2^{2s}\Gamma(n/2+s)\Gamma(1+s)}\,\big(1-|x|^2\big)^s
\end{align*}
so that
\begin{multline*}
k_{n,s}\,\frac{\big(1-|x|^2\big)^s}{{|x-y|}^{n}}\int_0^1
\frac{\eta^{s-1}}{\big(\rho(x,y)\eta+1\big)^{n/2}}\;d\eta
-\gamma_{n,s}\int_{D_2}\frac{G_{D_1}(x,\tau(z))}{{|y-z|}^n{(|z|^2-1)}^s}\;dz\geq\\
\geq \frac{k_{n,s}}{s2^n}\big(1-|x|^2\big)^s
-\frac{\gamma_{n,s}}{2^n}\frac{\Gamma(n/2)}{2^{2s}\Gamma(n/2+s)\Gamma(1+s)}\,\big(1-|x|^2\big)^s
\end{multline*}
where we have used~$\dist(D_1,D_2)\geq 2$ and~$(1-|x|^2)(1-|y|^2)+|x-y|^2\leq 4$ for~$x,y\in D_1$. The above estimate provides~\eqref{claim-green2} if we check
\begin{align*}
\frac{1}{s\Gamma(s)^2}
-\frac{1}{\Gamma(s-\lfloor s\rfloor)\,\Gamma(1-s+\lfloor s\rfloor)}\frac{\Gamma(n/2)}{\Gamma(n/2+s)\Gamma(1+s)}\geq 0
\end{align*}
which, by using standard properties\footnote{We need in particular the \textit{Euler's reflection formula}:~$\Gamma(\alpha)\Gamma(1-\alpha)=\dfrac{\pi}{\sin(\pi \alpha)}$ for~$\alpha\in\mathbb{C}\setminus\Z$.} of the~$\Gamma$ function, is equivalent to
\begin{align}\label{8743297423}
\frac{\sin\big((s-\lfloor s\rfloor)\pi\big)}{\pi}\frac{\Gamma(n/2)\Gamma(s)}{\Gamma(n/2+s)}\leq 1.
\end{align}
We now use the integral representation of the \textit{Beta function} in order to deduce
\begin{align*}
\frac{\Gamma(n/2)\Gamma(s)}{\Gamma(n/2+s)}=\int_0^1{r}^{n/2-1}{(1-r)}^{s-1}\;dr\leq\int_0^1{r}^{n/2-1}\;dr=\frac2n\leq2.
\end{align*}
This proves~\eqref{8743297423} and, therefore,~\eqref{claim-green2}.

From the above,~\eqref{claim-green3} follows and we can proceed as in~\cite{ajs-repr}
to conclude the proof.
\end{proof}

\begin{theorem}%\label{part-max-princ}
Under the assumptions of Proposition~\ref{part-max-princ-prelim}, the weak solution~$v\in\cH^s_0(D_1)$ of
\begin{align*}
\Ds v-{(-1)}^{\lfloor s\rfloor}\Ds(v\circ\tau)=g
\qquad\text{in }D_1
\end{align*}
is nonnegative.
\end{theorem}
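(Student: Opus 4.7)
The plan is to deduce this theorem from Proposition~\ref{part-max-princ-prelim} by extending~$v$ into an antisymmetric (or symmetric, according to the parity of~$\lfloor s\rfloor$) function on the full domain~$\Omega$. Given the weak solution~$v\in\cH^s_0(D_1)$, I would set
\[
u:=v-(-1)^{\lfloor s\rfloor}\,v\circ\tau,
\]
which lies in~$\cH^s_0(\Omega)$ because~$v$ is supported in~$\overline{D_1}$,~$v\circ\tau$ is supported in~$\overline{D_2}$, and these two sets are disjoint.

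The first step is to verify that~$u$ is the weak solution of~$\Ds u=f$ in~$\Omega$ with~$f=g-(-1)^{\lfloor s\rfloor}g\circ\tau$. On~$D_1$ this is precisely the equation defining~$v$. On~$D_2$ one uses that~$\tau$ is an isometry of~$\R^n$, so the pseudo-differential characterization~\eqref{fraclapl-pseudo} immediately yields the covariance~$\Ds(v\circ\tau)=(\Ds v)\circ\tau$. Combining this identity with~$\tau\circ\tau=\mathrm{id}$ and the equation satisfied by~$v$ on~$D_1$, a short algebraic manipulation produces~$\Ds u=-(-1)^{\lfloor s\rfloor}g\circ\tau$ on~$D_2$, matching~$f$ there.

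Once this is in place, Proposition~\ref{part-max-princ-prelim} applied to the same datum~$g$ furnishes a nonnegative~$\tilde v\in\cH^s_0(D_1)$ such that~$u=\tilde v-(-1)^{\lfloor s\rfloor}\tilde v\circ\tau$ in~$\Omega$. The decomposition of an element of~$\cH^s_0(\Omega)$ as a sum of a function supported in~$\overline{D_1}$ and one supported in~$\overline{D_2}$ is evidently unique (by restriction to~$D_1$), so comparing the two expressions for~$u$ on~$D_1$ yields~$v=\tilde v\geq 0$, as required.

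The main non-routine point is the rigorous weak formulation of~$\Ds(v\circ\tau)$ tested against functions in~$\cH^s_0(D_1)$: since~$\supp(v\circ\tau)\subset\overline{D_2}$ and~$D_1\cap D_2=\emptyset$, the bilinear pairing~$\cE_s(v\circ\tau,\varphi)$ reduces via~\eqref{disjointsupport} to a convergent double integral with kernel~$|x-y|^{-n-2s}$, and the isometric covariance then transfers cleanly to this weak setting. Apart from this bookkeeping---and the standard verification of existence and uniqueness of~$v$ via the Lax-Milgram theorem applied to the associated coercive bilinear form on~$\cH^s_0(D_1)$, which in any case follows from solving in~$\cH^s_0(\Omega)$ and restricting---I do not anticipate any substantial obstacle.
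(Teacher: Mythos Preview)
Your proposal is correct and is exactly what the paper has in mind: the paper's proof is the single sentence ``It follows immediately from Proposition~\ref{part-max-princ-prelim},'' and you have simply written out the obvious bridge---extend $v$ to $u=v-(-1)^{\lfloor s\rfloor}v\circ\tau\in\cH^s_0(\Omega)$, check via the isometry invariance of $\cE_s$ (equivalently, the relation $u\circ\tau=-(-1)^{\lfloor s\rfloor}u$) that $u$ weakly solves $\Ds u=f$ in $\Omega$, apply the proposition, and restrict back to $D_1$. No divergence from the paper's approach.
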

\begin{proof}
It follows immediately from Proposition~\ref{part-max-princ-prelim}.
\end{proof}

\section{The Faber-Krahn inequality in dimension one}

In this section we restrict to~$n=1$ and consider an open set~$\Omega\subset\R$ with~$|\Omega|=2$.
We denote by
\begin{align*}
\Omega &= \bigcup_{k\in \cM} I_k,
\qquad\text{where~$I_k$'s are disjoint open intervals,} \ \cM\subseteq \N, \\
\varphi &= \sum_{k\in\cM} \varphi_k,
\qquad \varphi_k\in\cH^s_0(I_k), \text{ for any } k\in\cM.
\end{align*}
% By~$\mathrm{card}(\cM)$, we will denote in the following the cardinality of~$\cM$.

\begin{lemma}\label{lem:one sign}
For any~$s>0$ and~$k\in\cM$, any first eigenfunction~$\varphi$ is of one sign in~$I_k$.
\end{lemma}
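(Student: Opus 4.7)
The plan is to argue by contradiction, adapting the cone-projection strategy of Proposition~\ref{proj-vs-orig} from two balls to a union of several intervals. Suppose some $\varphi_{k_0}$ changes sign on $I_{k_0}$, and for an assignment of signs $(\sigma_k)_{k\in\cM}\in\{+,-\}^\cM$ to be chosen, define
\[
\tilde\varphi := \sum_{k\in\cM}\tilde\varphi_k,\qquad \tilde\varphi_k := 2\pi_k^{\sigma_k}\varphi_k-\varphi_k\in\cH^s_0(I_k),
\]
where $\pi_k^\pm$ are the projections onto the positive and negative cones of $\cH^s_0(I_k)$ from Appendix~\ref{sec:proj}. Boggio's maximum principle on the interval~\cite{ajs-boggio}*{Theorem~1.1} combined with~\eqref{eq:ppp} gives $\sigma_k\tilde\varphi_k\geq|\varphi_k|$ pointwise on $I_k$, with \emph{strict} inequality on a subset of $I_{k_0}$ of positive measure, because $\varphi_{k_0}$ is assumed sign-changing. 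Thus $\|\tilde\varphi\|_{L^2(\Omega)}^2>\|\varphi\|_{L^2(\Omega)}^2$ irrespective of the choice of signs.

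The diagonal contributions of the energy would be estimated by~\eqref{eq:downward2}, giving $\cE_s(\tilde\varphi_k,\tilde\varphi_k)\leq \cE_s(\varphi_k,\varphi_k)$. For the off-diagonal terms, since the $I_k$ are pairwise disjoint, the representation~\eqref{disjointsupport} applies, and the pointwise bound $|\tilde\varphi_j(x)\tilde\varphi_k(y)|\geq|\varphi_j(x)\varphi_k(y)|$ on $I_j\times I_k$ yields
\[
\cE_s(\tilde\varphi_j,\tilde\varphi_k)\leq\cE_s(\varphi_j,\varphi_k)\qquad\text{whenever }\ \sigma_j\sigma_k=(-1)^{\lfloor s\rfloor},
\]
because under this compatibility condition the sign of the pointwise product $\tilde\varphi_j(x)\tilde\varphi_k(y)$ matches the sign $(-1)^{\lfloor s\rfloor+1}$ of the kernel in~\eqref{disjointsupport}. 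When $\lfloor s\rfloor\in 2\N_0$ (in particular throughout $s\in(0,1)$), the uniform choice $\sigma_k=+$ satisfies the compatibility for every pair: every $\tilde\varphi_k$ is nonnegative and every cross-term inequality holds. Summing then gives $\cE_s(\tilde\varphi,\tilde\varphi)\leq\cE_s(\varphi,\varphi)$, which coupled with the strict $L^2$ improvement contradicts the minimality of $\varphi$ in the Rayleigh quotient defining $\lambda(\Omega)$.

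The delicate case is $\lfloor s\rfloor\in 2\N_0+1$ with $|\cM|\geq 3$, where the constraint $\sigma_j\sigma_k=-1$ cannot be enforced on all pairs simultaneously. I expect this to be the main obstacle. A natural workaround is to modify $\varphi$ on the single interval $I_{k_0}$ only (keeping every other $\varphi_k$ fixed), so that the off-diagonal correction reduces to the single expression
\[
4\cE_s\big(\pi_{k_0}^\sigma\varphi_{k_0}-\varphi_{k_0},\psi\big),\qquad \psi:=\sum_{k\neq k_0}\varphi_k,
\]
and the remaining task is to choose $\sigma\in\{+,-\}$ making the Rayleigh quotient of the perturbation strictly smaller than $\lambda(\Omega)$. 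I would address this by testing the Euler--Lagrange identity $\cE_s(\varphi,v)=\lambda(\Omega)\langle\varphi,v\rangle_{L^2}$ against $v=\pi_{k_0}^\pm\varphi_{k_0}-\varphi_{k_0}\in\cH^s_0(I_{k_0})$, combining it with the Hilbert orthogonality $\cE_s(\pi_{k_0}^\sigma\varphi_{k_0}-\varphi_{k_0},\pi_{k_0}^\sigma\varphi_{k_0})=0$ coming from the variational characterisation of the projection, and showing that at least one of $\sigma=+$ or $\sigma=-$ produces a negative enough cross-term to be compensated by the strict $L^2$ gain; this forces the desired contradiction.
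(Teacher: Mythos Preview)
Your treatment of the case $\lfloor s\rfloor\in 2\N_0$ is correct, though more elaborate than the paper's: the paper does not split into parities at all and modifies $\varphi$ on the \emph{single} interval $I_k$ only, setting $\widetilde\varphi=\varphi-2\varphi_k+2\pi_k^+\varphi_k$, so that the energy comparison reduces to
\[
\cE_s(\widetilde\varphi,\widetilde\varphi)=\cE_s(\varphi,\varphi)+4\,\cE_s\big(\pi_k^+\varphi_k-\varphi_k,\ \varphi-\varphi_k\big).
\]
Your ``global'' modification on every $I_j$ works when $\lfloor s\rfloor$ is even, but forces you into the compatibility obstruction you describe, whereas the one-interval move sidesteps it entirely.

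Your proposed fix for the odd case has a genuine gap. Write $v^\sigma=\pi_{k_0}^\sigma\varphi_{k_0}-\varphi_{k_0}$, so that $\tilde\varphi^\sigma=\varphi+2v^\sigma$. Using the Euler--Lagrange identity $\cE_s(\varphi,v^\sigma)=\lambda\langle\varphi_{k_0},v^\sigma\rangle_{L^2}$ together with the projection orthogonality $\cE_s(\pi_{k_0}^\sigma\varphi_{k_0},v^\sigma)=0$ as you suggest, one computes \emph{exactly}
\[
\cE_s(\tilde\varphi^\sigma,\tilde\varphi^\sigma)-\lambda\|\tilde\varphi^\sigma\|_{L^2}^2
=4\big(\cE_s(v^\sigma,v^\sigma)-\lambda\|v^\sigma\|_{L^2}^2\big)\ \geq\ 0,
\]
because $v^\sigma\in\cH^s_0(I_{k_0})\subset\cH^s_0(\Omega)$. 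Thus neither choice $\sigma\in\{+,-\}$ ever pushes the Rayleigh quotient \emph{below} $\lambda(\Omega)$; your ``strict $L^2$ gain'' is exactly absorbed by the Euler--Lagrange relation, and no contradiction results. (Note also that~\eqref{eq:downward2} is in fact an equality, by~\eqref{eq:test0}, so there is no hidden slack to exploit there either.)

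The paper's remedy is different from yours: rather than toggling between $\pi_k^+$ and $\pi_k^-$ on $I_k$, it keeps $\pi_k^+$ fixed and flips the sign of the \emph{complementary} piece $\varphi-\varphi_k$ in the competitor. Since $\pi_k^+\varphi_k-\varphi_k\geq 0$ on $I_k$ and the two factors have disjoint supports, \eqref{disjointsupport} makes the cross-term $4\,\cE_s(\pi_k^+\varphi_k-\varphi_k,\pm(\varphi-\varphi_k))$ change sign with the choice of $\pm$, so one of the two competitors $\pm(\varphi-\varphi_k)+(2\pi_k^+\varphi_k-\varphi_k)$ has the extra term nonpositive. This is the move you are missing.
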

\begin{proof}
Denote by~$\pi^+_k$ the projection onto the positive cone~$\cC^+(I_k)$
as defined in~\eqref{positive cone}-\eqref{projection property}. 
Consider the function
\begin{align*}
\widetilde\varphi=\varphi-2\varphi_k+2\pi^+_k\varphi_k \quad\in\cH^s_0(\Omega)
\end{align*}
and compute
\begin{align*}
\|\widetilde\varphi\|_{L^2(\Omega)}=
\|\varphi-\varphi_k\|_{L^2(\Omega\setminus I_k)}+\|2\pi^+_1\varphi_k-\varphi_k\|_{L^2(I_k)}
\end{align*}
where~$|\varphi_k|=|\varphi_k-\pi^+_k\varphi_k+\pi^+_k\varphi_k|\leq 2\pi^+_k\varphi_k-\varphi_k$
by the maximum principles on intervals and~\eqref{eq:ppp}. Therefore~$\|\widetilde\varphi\|_{L^2(\Omega)}\leq\|\varphi\|_{L^2(\Omega)}$.

Furthermore
\begin{align*}
\cE_s(\widetilde\varphi,\widetilde\varphi) & =
\cE_s(\varphi-\varphi_k,\varphi-\varphi_k)
+\cE_s(2\pi_k^+\varphi_k-\varphi_k,2\pi_k^+\varphi_k-\varphi_k)
+2\cE_s(2\pi_k^+\varphi_k-\varphi_k,\varphi-\varphi_k) \\
& \leq
\cE_s(\varphi-\varphi_k,\varphi-\varphi_k)
+\cE_s(\varphi_k,\varphi_k)
+2\cE_s(2\pi_k^+\varphi_k-\varphi_k,\varphi-\varphi_k) \\
& =
\cE_s(\varphi,\varphi)
+4\cE_s(\pi_k^+\varphi_k-\varphi_k,\varphi-\varphi_k)
\end{align*}
by~\eqref{eq:downward2}. Up to changing the sign of~$\varphi-\varphi_k$, we deduce~$\cE_s(\widetilde\varphi,\widetilde\varphi)\leq\cE_s(\varphi,\varphi)$ by~\eqref{disjointsupport}. 
\end{proof}

\begin{lemma}\label{lem:fill the holes}
For any~$s>0$, if there exist~$k,h\in\cM,k\neq h$, such that~$\mathrm{dist}(I_k,I_h)=0$,
then there exists~$x_0\in\R$ such that~$I_k\cup\{x_0\}\cup I_h$ is connected
and~$\lambda(\Omega\cup\{x_0\})\leq\lambda(\Omega)$.
\end{lemma}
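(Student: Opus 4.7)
The proof reduces to the essentially topological observation that $\Omega$ and $\Omega\cup\{x_0\}$ have the same closure, and therefore define the same Dirichlet Sobolev space.

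First I would pin down the geometry. Two disjoint open intervals in $\mathbb{R}$ can have zero distance only if they share exactly one endpoint. After swapping the roles of $I_k$ and $I_h$ if necessary, we may write $I_k=(a,x_0)$ and $I_h=(x_0,b)$ for some $a<x_0<b$, so that
\[
I_k\cup\{x_0\}\cup I_h=(a,b),
\]
which is the desired connected interval.

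For the eigenvalue inequality, the key point is that $x_0\in\partial I_k\cap\partial I_h\subseteq\overline{\Omega}$, so $\overline{\Omega\cup\{x_0\}}=\overline{\Omega}$. Since the definition of $\cH^s_0$ only sees the closure of the underlying set,
\[
\cH^s_0(\Omega\cup\{x_0\})=\big\{u\in H^s(\mathbb{R}): u\equiv 0 \text{ in } \mathbb{R}\setminus\overline{\Omega\cup\{x_0\}}\big\}=\cH^s_0(\Omega).
\]
Moreover $\Omega\cup\{x_0\}$ is itself open (it equals $(a,b)\cup\bigcup_{j\in\cM\setminus\{k,h\}} I_j$), so the variational characterization of the first eigenvalue from the preceding lemma applies and in fact yields $\lambda(\Omega\cup\{x_0\})=\lambda(\Omega)$, which is stronger than the stated inequality. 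The measure-normalization $|\Omega|=2$ is preserved since a single point has zero Lebesgue measure.

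I do not foresee any real obstacle, as the statement is essentially a normalization. Its role in the proof of the one-dimensional Faber-Krahn inequality is presumably to let the authors assume without loss of generality that the connected components of $\Omega$ are separated by positive distance. This is convenient both so that the disjoint-support identity \eqref{disjointsupport} can be invoked freely between different components and so that the decomposition $\varphi=\sum_{k\in\cM}\varphi_k$ with $\varphi_k\in\cH^s_0(I_k)$ used in Lemma~\ref{lem:one sign} is unambiguous (indeed, if two components shared an endpoint, forcing $\varphi_k$ to vanish on $\overline{I_k}^{\,c}$ and $\varphi_h$ to vanish on $\overline{I_h}^{\,c}$ would impose an artificial condition $\varphi(x_0)=0$ not present in $\cH^s_0(\Omega)$ itself).
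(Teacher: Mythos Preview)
Your argument is correct and follows the same idea as the paper: the paper's one-line proof simply invokes the inclusion $\cH^s_0(\Omega)\subseteq\cH^s_0(\Omega\cup\{x_0\})$, while you observe that in fact the closures coincide, so the two spaces are equal and $\lambda(\Omega\cup\{x_0\})=\lambda(\Omega)$. Your version is slightly more detailed and yields the sharper equality, but the underlying mechanism is identical.
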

\begin{proof}
This simply follows by the inclusion~$\cH^s_0(\Omega)\subseteq\cH^s_0(\Omega\cup\{x_0\})$.
\end{proof}

\subsection{When the integer part of \texorpdfstring{$s$}{s} is even}

\begin{proposition}\label{prop:even positive}
Let~$\lfloor s\rfloor\in 2\N_0$. Then~$\varphi\geq 0$ in~$\Omega$. 
\end{proposition}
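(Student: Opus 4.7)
The approach is to pit $\varphi$ against its pointwise absolute value: I want to show $|\varphi|$ is an admissible competitor for $\lambda(\Omega)$ with the same $L^2$ norm as $\varphi$ but with strictly smaller Dirichlet energy, unless $\varphi$ is already of one sign. The assumption $\lfloor s\rfloor\in 2\N_0$ enters precisely through the sign in~\eqref{disjointsupport}, which governs the cross-term between $\varphi^+$ and $\varphi^-$.

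The main (and essentially only) delicate point is verifying that $|\varphi|\in\cH^s_0(\Omega)$. For a general $H^s$-function with $s>1/2$ this is a genuine obstruction, since $u\mapsto|u|$ is not a bounded operation on $H^s$. Here it is disposed of by Lemma~\ref{lem:one sign}: on each component $I_k$ the function $\varphi_k$ is of one sign, so $|\varphi_k|=\pm\varphi_k\in\cH^s_0(I_k)$, and hence $|\varphi|=\sum_{k}|\varphi_k|\in\cH^s_0(\Omega)$.

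Next, I would partition $\cM=\cM_+\sqcup\cM_-$ according to whether $\varphi_k\geq 0$ or $\varphi_k\leq 0$ on $I_k$, and set
\[
\varphi^+:=\sum_{k\in\cM_+}\varphi_k\geq 0,\qquad \varphi^-:=-\sum_{k\in\cM_-}\varphi_k\geq 0,
\]
so that $\varphi=\varphi^+-\varphi^-$, $|\varphi|=\varphi^++\varphi^-$, $\varphi^+\varphi^-\equiv 0$ in $\R$, and $\||\varphi|\|_{L^2(\Omega)}=\|\varphi\|_{L^2(\Omega)}$. A bilinear expansion then yields
\[
\cE_s(|\varphi|,|\varphi|)-\cE_s(\varphi,\varphi)=4\,\cE_s(\varphi^+,\varphi^-).
\]

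Finally, since $\lfloor s\rfloor\in 2\N_0$ forces $(-1)^{\lfloor s\rfloor+1}=-1$, equation~\eqref{disjointsupport} gives
\[
\cE_s(\varphi^+,\varphi^-)=-\frac{c_{1,s}}{2}\int_{\R}\int_{\R}\frac{\varphi^+(x)\,\varphi^-(y)}{|x-y|^{1+2s}}\,dx\,dy\leq 0,
\]
with strict inequality whenever both $\varphi^\pm$ are nontrivial. That would make $|\varphi|$ a competitor with a strictly smaller Rayleigh quotient than $\varphi$, contradicting the minimality of $\varphi$. Therefore one of $\varphi^\pm$ vanishes identically, and after possibly replacing $\varphi$ by $-\varphi$ we conclude $\varphi\geq 0$ in $\Omega$.
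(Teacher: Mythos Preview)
Your proof is correct and follows essentially the same route as the paper: invoke Lemma~\ref{lem:one sign} to ensure $\varphi^\pm\in\cH^s_0(\Omega)$, then expand $\cE_s(|\varphi|,|\varphi|)-\cE_s(\varphi,\varphi)=4\cE_s(\varphi^+,\varphi^-)$ and use~\eqref{disjointsupport} with $\lfloor s\rfloor\in 2\N_0$ to see this is strictly negative unless one of $\varphi^\pm$ vanishes. You spell out a few steps (the partition of $\cM$, the $L^2$ invariance, the strict-vs-nonstrict case distinction) that the paper leaves implicit, but the argument is the same.
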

\begin{proof}
By Lemma~\ref{lem:one sign} we already know that~$\varphi$ is of one sign on each connected component of~$\Omega$, so that~$\varphi^+,\varphi^-\in\cH^s_0(\Omega)$. As~$\lfloor s\rfloor\in 2\N_0$ by assumption, one has by~\eqref{disjointsupport}
\begin{align*}
\cE_s(|\varphi|,|\varphi|)&=
\cE_s(\varphi^+,\varphi^+)+\cE_s(\varphi^-,\varphi^-)+2\cE_s(\varphi^+,\varphi^-)\\
&<\cE_s(\varphi^+,\varphi^+)+\cE_s(\varphi^-,\varphi^-)-2\cE_s(\varphi^+,\varphi^-)
=\cE_s(\varphi,\varphi).
\end{align*}
\end{proof}

\begin{theorem}\label{thm:faber-krahn even}
Let~$\lfloor s\rfloor\in 2\N_0$. 
Then~$\lambda(-1,1)\leq\lambda(\Omega)$ for any open set~$\Omega\subset\R$ with~$|\Omega|=2$.
Moreover, equality holds if and only if~$\Omega$ is an interval.
\end{theorem}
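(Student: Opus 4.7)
My plan is to use a sliding argument built on the positivity result of Proposition~\ref{prop:even positive}, the disjoint-support formula~\eqref{disjointsupport}, and induction on the number of connected components of $\Omega$. First I reduce to the case that $\Omega$ is bounded with finitely many components $I_1, I_2, \ldots, I_N$ (ordered left-to-right); the general case should follow from monotonicity of $\lambda$ under inclusion and approximation of $\Omega$ from inside. Decompose the first eigenfunction as $\varphi = \sum_k \varphi_k$ with $\varphi_k \in \cH^s_0(I_k)$ and $\varphi_k \geq 0$, by Proposition~\ref{prop:even positive} and Lemma~\ref{lem:one sign}. Since $\lfloor s\rfloor \in 2\N_0$, the sign in~\eqref{disjointsupport} is $(-1)^{\lfloor s\rfloor+1} = -1$, so every cross interaction
\[
\cE_s(\varphi_h,\varphi_j) \;=\; -\,\frac{c_{1,s}}{2}\int_{I_h}\!\!\int_{I_j}\!\frac{\varphi_h(x)\,\varphi_j(y)}{|x-y|^{1+2s}}\,dx\,dy
\]
is non-positive, and strictly negative whenever both $\varphi_h$ and $\varphi_j$ are nontrivial.

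Assume $N \geq 2$ and let $\gamma = \mathrm{dist}(I_1, I_2) > 0$. For $t \in [0, \gamma]$ let $\Omega_t$ be obtained by translating $I_2, \ldots, I_N$ to the left by $t$ (so $|\Omega_t| = 2$ and the components of $\Omega_t$ stay pairwise disjoint), and let $\eta^t$ be the corresponding rigid translate of $\varphi$, so $\eta^t \in \cH^s_0(\Omega_t)$ and $\|\eta^t\|_{L^2}$ is independent of $t$. The diagonal terms $\cE_s(\eta^t_k,\eta^t_k)$ and the cross-terms $\cE_s(\eta^t_h,\eta^t_j)$ for $h, j \geq 2$ are invariant under the translation; only the interactions between $\eta^t_1 = \varphi_1$ and the shifted $\eta^t_j$ ($j \geq 2$) depend on $t$, and by~\eqref{disjointsupport} a change of variable gives
\[
\cE_s(\eta^t_1, \eta^t_j) \;=\; -\,\frac{c_{1,s}}{2}\int_{I_1}\!\!\int_{I_j}\!\frac{\varphi_1(x)\,\varphi_j(y)}{(y-x-t)^{1+2s}}\,dx\,dy,
\]
which is strictly decreasing in $t$ whenever $\varphi_1$ and $\varphi_j$ are both nontrivial. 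Hence $t \mapsto \cE_s(\eta^t,\eta^t)$ is strictly decreasing as soon as at least one $\varphi_j$ ($j\geq 2$) is nontrivial.

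That nontriviality always holds under $N \geq 2$: if $\varphi$ were supported in $I_1$ alone, then testing the weak eigenvalue identity $\cE_s(\varphi, v) = \lambda(\Omega)\,(\varphi, v)_{L^2}$ against a nontrivial nonnegative $v \in \cH^s_0(I_2)$ would give $0$ on the right-hand side but, by~\eqref{disjointsupport}, a strictly negative quantity on the left, a contradiction. At $t = \gamma$ the intervals $I_1$ and $I_2 - \gamma$ touch at a single point $x_0$, so Lemma~\ref{lem:fill the holes} produces an open set $\widetilde\Omega = \Omega_\gamma \cup \{x_0\}$ with $N-1$ components and $|\widetilde\Omega| = 2$ satisfying $\lambda(\widetilde\Omega) \leq \lambda(\Omega_\gamma)$. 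Chaining:
\[
\lambda(\widetilde\Omega) \;\leq\; \lambda(\Omega_\gamma) \;\leq\; \frac{\cE_s(\eta^\gamma,\eta^\gamma)}{\|\eta^\gamma\|_{L^2}^2} \;<\; \frac{\cE_s(\varphi,\varphi)}{\|\varphi\|_{L^2}^2} \;=\; \lambda(\Omega).
\]

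By induction on $N$, with base case $N=1$ (the single interval of length $2$, for which $\lambda(\Omega) = \lambda(-1,1)$ by translation invariance), one concludes $\lambda(-1,1) \leq \lambda(\Omega)$, with strict inequality whenever $N \geq 2$. The delicate points are not in any individual computation but in (i) tracking the strict monotonicity of the single interaction integral under translation, and (ii) excluding the degenerate case that $\varphi$ vanishes identically outside $I_1$; both are handled via~\eqref{disjointsupport} exactly because $\lfloor s\rfloor$ is even, which is what makes this proof work only in the "even" case treated here.
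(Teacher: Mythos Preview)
Your argument is essentially the paper's: both rest on Proposition~\ref{prop:even positive} and the sign in~\eqref{disjointsupport} (negative when $\lfloor s\rfloor$ is even) to show that rigidly sliding components together strictly lowers the Rayleigh quotient. The paper slides the portion of $\varphi$ to the right of a single gap point $x_0$ and concludes directly, while you organize the same idea as an induction on the number of intervals; your nontriviality step (testing the eigenvalue identity against $v\in\cH^s_0(I_2)$) is a clean alternative to the variational Lemma~\ref{lem:nontriv}, and your equality case comes from strict monotonicity of the slide rather than from the strong maximum principle the paper invokes. The one point you leave unjustified is the opening reduction to finitely many components ``by monotonicity and approximation from inside'': this really needs $u_N\to\varphi$ in $H^s(\R)$, which is not automatic---the paper supplies exactly this convergence argument in the proof of Theorem~\ref{thm:faber-krahn odd}, but in the even case avoids the issue altogether by sliding on the full (possibly infinite) configuration. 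If you want strict inequality for a non-interval $\Omega$ with infinitely many components, the cleanest fix is to slide once across one gap (producing a strict drop) and then apply the non-strict inequality to the resulting set.
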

\begin{proof}
By Lemma~\ref{lem:fill the holes}, we can restrict our attention to those~$\Omega$'s
satisfying~$\dist(I_k,I_h)>0$ for all~$k,h\in\cM$,~$k\neq h$.

Let~$x_0\in\R\setminus\Omega$ such that~$(-\infty,x_0)\cap\Omega\neq\emptyset\neq\Omega\cap(x_0,+\infty)$.
Write~$\varphi=\varphi\1_{(-\infty,x_0)}+\varphi\1_{(x_0,+\infty)}$.
We have
\begin{align*}
\cE_s(\varphi,\varphi)=
\cE_s(\varphi\1_{(-\infty,x_0)},\varphi\1_{(-\infty,x_0)})
+\cE_s(\varphi\1_{(x_0,+\infty)},\varphi\1_{(x_0,+\infty)})
+2\cE_s(\varphi\1_{(-\infty,x_0)},\varphi\1_{(x_0,+\infty)}).
\end{align*}
By Proposition~\ref{prop:even positive} and the fact that~$\lfloor s\rfloor\in 2\N_0$,
the last term is non-positive, indeed
\begin{align*}
\cE_s(\varphi\1_{(-\infty,x_0)},\varphi\1_{(x_0,+\infty)})=-\frac{c_{1,s}}2\int_{-\infty}^{x_0}\int_{x_0}^{+\infty}\frac{\varphi(x)\,\varphi(y)}{{(y-x)}^{1+2s}}\;dy\;dx.
\end{align*}
Since there exists~$\eps>0$ such that~$(x_0-\eps,x_0)\cap\Omega=\emptyset$, then substituting~$\varphi(y)\1_{(x_0,+\infty)}(y)$ with~$\varphi(y+\eps)\1_{(x_0,+\infty)}(y)$ gives
\begin{multline*}
-\int_{-\infty}^{x_0}\int_{x_0-\eps}^{+\infty}\frac{\varphi(x)\,\varphi(y+\eps)}{{(y-x)}^{1+2s}}\;dy\;dx=
-\int_{-\infty}^{x_0}\int_{x_0}^{+\infty}\frac{\varphi(x)\,\varphi(y)}{{(y-x-\eps)}^{1+2s}}\;dy\;dx\leq\\
\leq
-\int_{-\infty}^{x_0}\int_{x_0}^{+\infty}\frac{\varphi(x)\,\varphi(y)}{{(y-x)}^{1+2s}}\;dy\;dx.
\end{multline*}
We then deduce that~$\lambda(\Omega)$ is minimized when~$\Omega$ is connected.

We now prove the second claim in the statement. In order to do so, we only need to prove that,
for any~$x_0\in(-1,1)$, it holds
\begin{align*}
\lambda(-1,1)<\lambda\big((-1,1)\setminus\{x_0\}\big).
\end{align*} 
This is a direct consequence of the strong maximum principle in~$(-1,1)$,
which can be stated as a corollary of the positivity of the Green function,
see~\cite{ajs-boggio}*{Theorem 1.1}.
\end{proof}

\subsection{When the integer part of \texorpdfstring{$s$}{s} is odd}

\begin{proposition}\label{prop:finitely many intervals}
Let~$\lfloor s\rfloor\in 2\N_0+1$. 
Suppose that~$\Omega\subset\R$ is an open set with~$|\Omega|=2$ and
\[
\inf\big\{\dist(I_k,I_h):\;k,h\in\cM,\;k\neq h\big\}>0.
\]
Then~$\lambda(-1,1)\leq\lambda(\Omega)$ and equality holds if and only if~$\Omega$ is an interval.
\end{proposition}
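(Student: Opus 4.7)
The plan is induction on the number $m=|\cM|$ of connected components of $\Omega$.  The base case $m=1$ is immediate: $\Omega$ is a translate of $(-1,1)$, so $\lambda(\Omega)=\lambda(-1,1)$ by translation invariance of $\Ds$.

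For the inductive step with $m\geq 2$, let $\varphi\in\cH^s_0(\Omega)$ be a first eigenfunction; by Lemma~\ref{lem:one sign}, $\varphi$ is of constant sign $\epsilon_k\in\{\pm 1\}$ on each $I_k$.  Pick a point $x_0$ lying in the gap between two consecutive intervals $I_k$ and $I_{k+1}$, split $\varphi=\varphi_L+\varphi_R$ with $\varphi_L:=\varphi\1_{(-\infty,x_0)}$ and $\varphi_R:=\varphi\1_{(x_0,+\infty)}$, and test the admissible competitor $\tilde\varphi:=\varphi_L-\varphi_R\in\cH^s_0(\Omega)$, which has the same $L^2$ norm as $\varphi$: minimality of $\varphi$ forces $\cE_s(\tilde\varphi,\tilde\varphi)\geq\cE_s(\varphi,\varphi)$, which simplifies to the key inequality
\[
\cE_s(\varphi_L,\varphi_R)\leq 0.
\]

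Next, close the $k$-th gap by translating $I_{k+1},\ldots,I_m$ (together with $\varphi_R$) to the left by $\eps:=a_{k+1}-b_k>0$.  This produces a domain $\Omega'$ with $m-1$ components and $|\Omega'|=2$, and both $\psi_\pm:=\varphi_L\pm T_\eps\varphi_R$ belong to $\cH^s_0(\Omega')$ with $\|\psi_\pm\|_{L^2}=\|\varphi\|_{L^2}$.  Minimizing over the sign one gets
\[
\lambda(\Omega')\|\varphi\|_{L^2}^2\leq\cE_s(\varphi_L,\varphi_L)+\cE_s(\varphi_R,\varphi_R)-2\big|\cE_s(\varphi_L,T_\eps\varphi_R)\big|,
\]
while the key inequality rewrites $\cE_s(\varphi,\varphi)=\cE_s(\varphi_L,\varphi_L)+\cE_s(\varphi_R,\varphi_R)-2|\cE_s(\varphi_L,\varphi_R)|$.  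The inductive step $\lambda(\Omega')\leq\lambda(\Omega)$ therefore reduces to the cross-term comparison
\[
\big|\cE_s(\varphi_L,T_\eps\varphi_R)\big|\geq\big|\cE_s(\varphi_L,\varphi_R)\big|;
\]
once this is established, the induction hypothesis $\lambda(-1,1)\leq\lambda(\Omega')$ concludes the proof, and strict inequality in the previous display whenever $m\geq 2$, together with the non-triviality of each $\varphi_k$ (in the spirit of Lemma~\ref{lem:nontriv}), yields the equality characterization.

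The main obstacle is precisely this cross-term comparison.  By~\eqref{disjointsupport} with $(-1)^{\intgr{s}+1}=1$ in the present regime, the kernel $(y-\eps-x)^{-(1+2s)}$ is pointwise strictly larger than $(y-x)^{-(1+2s)}$ on $\mathrm{supp}\,\varphi_L\times\mathrm{supp}\,\varphi_R$, so whenever the integrand $\varphi_L(x)\varphi_R(y)$ has constant sign the desired inequality is immediate.  For $m=2$ this applies at once, since $\varphi_L=\varphi_1$ and $\varphi_R=\varphi_2$ are sign-definite and necessarily of opposite signs, by the very optimality argument underlying Theorem~\ref{thm:first-twoball} adapted to intervals.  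For $m\geq 3$ one of $\varphi_L,\varphi_R$ may have mixed signs and pointwise enlargement of the kernel is not enough; I would circumvent this by choosing the cut between the two rightmost intervals $I_{m-1}$ and $I_m$ so that at least $\varphi_R=\varphi_m$ is sign-definite, then write
\[
\cE_s(\varphi_L,T_\eps\varphi_m)=\sum_{j<m}\epsilon_j\epsilon_m\,\tilde A_{jm}(\eps),\qquad \tilde A_{jm}(\eps):=\cE_s(|\varphi_j|,T_\eps|\varphi_m|)>0,
\]
and derive the required monotonicity in $|\cdot|$ from the optimality-inherited sign condition $\sum_{j<m}\epsilon_j\epsilon_m\tilde A_{jm}(0)\leq 0$ combined with a convexity-type property of the one-variable profiles $\eps\mapsto\tilde A_{jm}(\eps)$ analogous to the Taylor expansion used in the $\cE_s(u,u)-\cE_s(u^*,u^*)$ computation of Paragraph~\ref{par:polyaszego-counter}.
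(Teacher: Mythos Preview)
Your induction on the number of components is genuinely different from the paper's argument, and the gap you identify as ``the main obstacle'' is real and your suggested fix does not close it. The paper avoids your difficulty by splitting $\varphi$ by \emph{sign} rather than by \emph{position}: since $\varphi$ changes sign on a disconnected $\Omega$ and is constant-sign on each $I_k$, both $\varphi^+,\varphi^-\in\cH^s_0(\Omega)$ are nonnegative. Setting $u_\delta(x)=\varphi^+(x)-\varphi^-(x-\delta)$ for small $|\delta|$, one computes directly
\[
\frac{d^2}{d\delta^2}\Big|_{\delta=0}\cE_s(u_\delta,u_\delta)=-c_{1,s}(1+2s)(2+2s)\int_\R\int_\R\frac{\varphi^+(x)\,\varphi^-(y)}{|x-y|^{3+2s}}\;dx\;dy<0,
\]
and since $\|u_\delta\|_{L^2}=\|\varphi\|_{L^2}$ this forces $\cE_s(u_\delta,u_\delta)<\cE_s(\varphi,\varphi)$ for some $\delta$. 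The integrand $\varphi^+(x)\varphi^-(y)$ has a fixed sign precisely because of the sign split, so no cross-term comparison of the type you need ever arises.

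By contrast, your positional split $\varphi=\varphi_L+\varphi_R$ leaves $\varphi_L$ with mixed signs as soon as $m\geq 3$: the alternating pattern $\epsilon_1=+,\,\epsilon_2=-,\,\epsilon_3=+$ is compatible with every optimality constraint you can extract by flipping subsets, and in that case your inequality $|\cE_s(\varphi_L,T_\eps\varphi_m)|\geq|\cE_s(\varphi_L,\varphi_m)|$ becomes $\tilde A_{23}(\eps)-\tilde A_{13}(\eps)\geq\tilde A_{23}(0)-\tilde A_{13}(0)$, a comparison of the \emph{rates} $\tilde A'_{23}$ versus $\tilde A'_{13}$. Convexity of each $\tilde A_{jm}$ separately says nothing about this; what matters is whether $\tilde A_{23}$ grows faster than $\tilde A_{13}$, and that depends on the unknown weights $|\varphi_1|,|\varphi_2|$ in a way the single scalar constraint $g(0)\leq 0$ does not pin down (make $|I_1|$ large so that $\int|\varphi_1|$ dominates $\int|\varphi_2|$). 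The analogy with Paragraph~\ref{par:polyaszego-counter} also breaks: there the second-difference estimate compares translates of one fixed profile, whereas here the pieces $|\varphi_j|$ are a priori unrelated.
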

\begin{proof}
Suppose that~$\Omega$ is not connected. In this case,~$\varphi$ changes sign in~$\Omega$: this simply follows from Lemma~\ref{lem:one sign} and~\eqref{disjointsupport}.
Moreover, by assumption there exists~$\delta_0>0$ such that
\begin{align*}
\dist(I_k,I_h)\geq 2\delta_0
\qquad\text{for any }k,h\in\cM,\ k\neq h.
\end{align*}
By Lemma~\ref{lem:one sign}, we have that~$\varphi^+,\varphi^-\in\cH^s_0(\Omega)$.
For~$\delta\in(-\delta_0,\delta_0)$, consider the function
\begin{align*}
u(x)=\varphi^+(x)-\varphi^-(x-\delta),
\qquad x\in\R.
\end{align*}
It holds~$u\in H^s(\R)$ and, moreover,~$\|u\|_{L^2(\R)}=\|\varphi\|_{L^2(\R)}$.
Also,
\begin{align*}
\cE_s(u,u) &= \cE_s(\varphi^+,\varphi^+)+\cE_s(\varphi^-,\varphi^-)
-c_{1,s}\int_\R\int_\R\frac{\varphi^+(x)\,\varphi^-(y-\delta)}{{|x-y|}^{1+2s}}\;dx\;dy \\
&= \cE_s(\varphi^+,\varphi^+)+\cE_s(\varphi^-,\varphi^-)
-c_{1,s}\int_\R\int_\R\frac{\varphi^+(x)\,\varphi^-(y)}{{|x-y-\delta|}^{1+2s}}\;dx\;dy.
\end{align*}
We now differentiate twice in~$\delta$ and obtain
\begin{align*}
& \frac1{c_{1,s}}\frac{d^2}{d\delta^2}\bigg|_{\delta=0}\cE_s(u,u)=
-(1+2s)\frac{d}{d\delta}\bigg|_{\delta=0}
\int_\R\int_\R\frac{\varphi^+(x)\,\varphi^-(y)}{{|x-y-\delta|}^{2+2s}}\,\mathrm{sgn}(x-y-\delta)\;dx\;dy \\
& =-(1+2s)\frac{d}{d\delta}\bigg|_{\delta=0}
\int_\R\int_y^{+\infty}\frac{\varphi^+(x)\,\varphi^-(y)}{{|x-y-\delta|}^{2+2s}}\;dx\;dy \\
&\qquad +(1+2s)\frac{d}{d\delta}\bigg|_{\delta=0}
\int_\R\int_{-\infty}^y\frac{\varphi^+(x)\,\varphi^-(y)}{{|x-y-\delta|}^{2+2s}}\;dx\;dy \\
& = -(1+2s)(2+2s)\int_\R\int_y^{+\infty}\frac{\varphi^+(x)\,\varphi^-(y)}{{|x-y|}^{3+2s}}\;dx\;dy \\
&\qquad -(1+2s)(2+2s)\int_\R\int_{-\infty}^y\frac{\varphi^+(x)\,\varphi^-(y)}{{|x-y-\delta|}^{3+2s}}\;dx\;dy \\
& = -(1+2s)(2+2s)\int_\R\int_\R\frac{\varphi^+(x)\,\varphi^-(y)}{{|x-y|}^{3+2s}}\;dx\;dy 
\ < 0.
\end{align*}
From this we deduce that, at least for some~$\delta\in(-\delta_0,\delta_0)$,
it holds~$\cE_s(u,u)<\cE_s(\varphi,\varphi)$.
\end{proof}

\begin{theorem}\label{thm:faber-krahn odd}
Let~$\lfloor s\rfloor\in 2\N_0+1$. 
Then~$\lambda(-1,1)\leq\lambda(\Omega)$ for any open set~$\Omega\subset\R$ with~$|\Omega|=2$.
% Moreover, equality holds if and only if~$\Omega$ is an interval.
\end{theorem}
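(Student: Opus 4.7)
The plan is to bootstrap Proposition~\ref{prop:finitely many intervals} via a truncation-and-rescaling approximation. Iteratively applying Lemma~\ref{lem:fill the holes}, I may assume without loss of generality that $\dist(I_k,I_h)>0$ for every pair $k\ne h$; the only case then not already handled by Proposition~\ref{prop:finitely many intervals} is that of $\mathcal M$ countably infinite with $\inf_{k\ne h}\dist(I_k,I_h)=0$, which I now address.

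Order the components so that $|I_1|\ge|I_2|\ge\ldots$ and set $\Omega_N:=\bigcup_{k=1}^N I_k$. Since $\sum_k|I_k|=|\Omega|=2$, $|\Omega_N|\uparrow 2$; dilating by $c_N:=2/|\Omega_N|\downarrow 1$, the set $\tilde\Omega_N:=c_N\Omega_N$ has measure exactly $2$ and is a finite union of pairwise-separated open intervals. By Proposition~\ref{prop:finitely many intervals} together with the one-dimensional scaling identity $\lambda(cA)=c^{-2s}\lambda(A)$,
\[
\lambda(-1,1)\ \le\ \lambda(\tilde\Omega_N)\ =\ c_N^{-2s}\,\lambda(\Omega_N).
\]
Since $c_N\to 1$, the claim reduces to showing $\lambda(\Omega_N)\to\lambda(\Omega)$, and the lower bound $\lambda(\Omega_N)\ge\lambda(\Omega)$ is immediate from $\Omega_N\subset\Omega$.

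For the matching upper bound I would use the truncation $\varphi^{(N)}:=\sum_{k\le N}\varphi_k$ as test function in the Rayleigh quotient of $\lambda(\Omega_N)$. Setting $r_N:=\varphi-\varphi^{(N)}=\sum_{k>N}\varphi_k$, which satisfies $\|r_N\|_{L^2}\to 0$, and testing the weak formulation of the eigenvalue problem for $\varphi$ against $r_N\in\mathcal H^s_0(\Omega)$, one gets $\mathcal E_s(\varphi,r_N)=\lambda(\Omega)\|r_N\|_{L^2}^2$; expanding the bilinear form then yields
\[
\mathcal E_s\big(\varphi^{(N)},\varphi^{(N)}\big)\ =\ \lambda(\Omega)\big(1-2\|r_N\|_{L^2}^2\big)+\mathcal E_s(r_N,r_N),
\]
so the whole argument reduces to proving $\mathcal E_s(r_N,r_N)=o(1)$ as $N\to\infty$.

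The main obstacle lies precisely in this last estimate: in the higher-order fractional setting no ready-made Poincaré-type inequality relates $\mathcal E_s(r_N,r_N)$ to $\|r_N\|_{L^2}^2$, and the tail expansion $\mathcal E_s(r_N,r_N)=\sum_{k,h>N}\mathcal E_s(\varphi_k,\varphi_h)$ features the signed interaction integrals of the form~\eqref{disjointsupport}. I expect the finiteness of $\mathcal E_s(\varphi,\varphi)=\lambda(\Omega)$, together with the continuity of $\varphi$ implied by the Sobolev embedding for $s>1/2$ and the zero-exterior condition (which forces $\varphi$ to vanish at each endpoint of each $I_k$, guaranteeing in particular that $\varphi^{(N)}$ is a legitimate element of $\mathcal H^s_0(\Omega_N)$), to force absolute convergence of this tail series; the vanishing of the tail then closes the argument.
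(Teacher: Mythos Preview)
Your overall strategy coincides with the paper's: reduce via Lemma~\ref{lem:fill the holes}, truncate to finitely many components~$\Omega_N$, rescale to measure~$2$, invoke Proposition~\ref{prop:finitely many intervals}, and pass to the limit; your expansion via the eigenvalue equation tested against~$r_N$ and the resulting reduction to~$\cE_s(r_N,r_N)\to 0$ are correct and match the paper's computation essentially line for line.

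The genuine gap is the last step, and your proposed route---absolute convergence of the tail series~$\sum_{k,h>N}\cE_s(\varphi_k,\varphi_h)$ inferred from finiteness of~$\cE_s(\varphi,\varphi)$ and Sobolev continuity---is not how the paper closes it and is not clearly workable: the full double series is only known to converge \emph{conditionally} (the cross terms~\eqref{disjointsupport} carry signs), and continuity of~$\varphi$ gives no quantitative control on those interaction integrals. The paper's device is different and exploits the parity hypothesis in an essential way. Since~$\lfloor s\rfloor$ is odd, the operator~$D:=\nabla(-\Delta)^{(\lfloor s\rfloor-1)/2}$ appearing in~\eqref{eq:energies} is a \emph{local} differential operator, so~$Dr_N$ is supported in~$\Omega\setminus\Omega_N$ and equals~$D\varphi$ there. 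Writing~$\cE_s(r_N,r_N)=\cE_{s-\lfloor s\rfloor}(Dr_N,Dr_N)$ in Gagliardo form and splitting the double integral, the piece over~$(\Omega\setminus\Omega_N)^2$ is dominated by the full Gagliardo integral of~$D\varphi$ and vanishes by dominated convergence, while the remaining piece is bounded by
\[
\int_{\Omega\setminus\Omega_N}\big|D\varphi(x)\big|^2\,\dist\!\big(x,\R\setminus\Omega\big)^{-2(s-\lfloor s\rfloor)}\,dx,
\]
which is the tail of a convergent integral thanks to a fractional Hardy inequality for~$\varphi\in\cH^s_0(\Omega)$ (the paper invokes~\cite{triebel}*{Section 4.3.2, equation (7)}). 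Substituting this argument for your absolute-convergence sketch turns your outline into the paper's proof.
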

\begin{proof}

By Proposition~\ref{prop:finitely many intervals} and Lemma~\ref{lem:fill the holes},
we only need to prove that the claim holds when~$\cM$ is countable and 
\[
\inf\big\{\dist(I_k,I_h):\;k,h\in\cM,\;k\neq h\big\}=0.
\]
We proceed via a continuity argument. 

%Partition~$\cM$ in the following way
%\begin{align*}
%\cJ_0 &:= \Big\{k\in\cM: \inf_{h\in\cM}\mathrm{dist}(I_k,I_h)\geq 1 \Big\}, \\
%\cJ_m &:= \Big\{k\in\cM: \inf_{h\in\cM}\mathrm{dist}(I_k,I_h)\in[2^{-m},2^{1-m}) \Big\},
%\qquad m\in\N, \\
%\cJ_\infty &:= \Big\{k\in\cM: \inf_{h\in\cM}\mathrm{dist}(I_k,I_h)=0 \Big\}.
%\end{align*}
%For every~$k\in\cM$ and~$\delta>0$, define
%\[
%E_{k,\delta}:=\big\{x\in\R:\mathrm{dist}(x,I_k)<\delta\big\}
%\]
%and, for every~$N\in\N$,
%\[
%\Omega_N=\bigg(\bigcup_{m=0}^N\bigcup_{k\in\cJ_m}I_k\bigg)\cup\bigg(\bigcup_{m=N+1}^{\infty}\bigcup_{k\in\cJ_m}E_{k,2^{1-m}}\bigg)\cup\bigg(\bigcup_{k\in\cJ_\infty}E_{k,???}\bigg).
%\]
%The set~$\Omega_N$ is open and such that~$\Omega\subset\Omega_N$, hence~$\lambda(\Omega)\geq\lambda(\Omega_N)$.
%Moreover,
%\[
%\lim_{N\uparrow\infty}\big|\Omega_N\big|=2
%\]
%since
%\[
%\big|\Omega_N\big|\leq\bigg(\sum_{m=0}^N\sum_{k\in\cJ_m}\big|I_k\big|\bigg)+\bigg(\sum_{m=N+1}^{\infty}\sum_{k\in\cJ_m}(|I_k|+2^{2-m})\bigg)+\bigg(\sum_{k\in\cJ_\infty}E_{k,2^{-N}}\bigg).
%\]
%By Proposition~\ref{prop:finitely many intervals}, we have that~$\lambda(\Omega_N)$ is greater that the first eigenvalue of the ball with volume~$|\Omega_N|$, and then
%\begin{align*}
%\lambda(\Omega_N)\geq\bigg(\frac2{|\Omega_N|}\bigg)^{2s}\lambda(-1,1).
%\end{align*}

Write 
\begin{align*}
\Omega &= \bigcup_{k\in\N} I_k,
\qquad\text{where~$I_k$'s are disjoint open intervals,} &
\Omega_N &= \bigcup_{k=1}^N I_k,\qquad\text{for any }N\in\N, \\
\varphi &= \sum_{k\in\N} \varphi_k,
\qquad \varphi_k\in\cH^s_0(I_k), \text{ for any } k\in\N, &
u_N &=\sum_{k=1}^N \varphi_k,\qquad\text{for any }N\in\N.
\end{align*}
By the inclusion~$\Omega_N\subset\Omega$, one simply has~$\lambda(\Omega_N)\geq\lambda(\Omega)$.
Moreover,
\begin{align*}
\lambda(\Omega)
&=
\cE_s(\varphi,\varphi)=\cE_s(u_N,u_N)+2\cE_s(\varphi-u_N,u_N)+\cE_s(\varphi-u_N,\varphi-u_N) \\
&\geq 
\lambda(\Omega_N)\|u_N\|^2_{L^2(\R^n)}+2\cE_s(\varphi-u_N,u_N)+\cE_s(\varphi-u_N,\varphi-u_N).
\end{align*}
By Proposition~\ref{prop:finitely many intervals}, we have that~$\lambda(\Omega_N)$ is greater than the first eigenvalue of the ball with volume~$|\Omega_N|$, and then by the scaling properties of $\Ds$
\begin{align*}
\lambda(\Omega_N)\geq\bigg(\frac2{|\Omega_N|}\bigg)^{2s}\lambda(-1,1).
\end{align*}
As we have~$u_N\to\varphi$ in~$H^s(\R^n)$, indeed by~\eqref{eq:energies} and since~$\varphi-u_N$ is supported in~$\Omega\setminus\Omega_N$
\begin{align*}
 \cE_s(\varphi-u_N,\varphi-u_N)&=\cE_{s-\intgr s}\big(D(\varphi-u_N),D(\varphi-u_N)\big)
\qquad \text{where}\quad D=\nabla{(-\lapl)}^{(\intgr s-1)/2} \\
& = \frac{c_{1,s-\intgr s}}2
\int_{\R}\int_{\R}\frac{\big|D(\varphi-u_N)(x)-D(\varphi-u_N)(y)\big|^2}{{|x-y|}^{1+2s-2\intgr s}}\;dx\;dy \\
& =\frac{c_{1,s-\intgr s}}2
\int_{\Omega\setminus\Omega_N}\int_{\Omega\setminus\Omega_N}\frac{\big|D\varphi(x)-D\varphi(y)\big|^2}{{|x-y|}^{1+2s-2\intgr s}}\;dx\;dy \\
& \quad +c_{1,s-\intgr s}
\int_{\R\setminus(\Omega\setminus\Omega_N)}\int_{\Omega\setminus\Omega_N}\frac{\big|D\varphi(x)\big|^2}{{|x-y|}^{1+2s-2\intgr s}}\;dx\;dy \\
& \leq 
\frac{c_{1,s-\intgr s}}2
\int_{\Omega\setminus\Omega_N}\int_{\Omega\setminus\Omega_N}\frac{\big|D\varphi(x)-D\varphi(y)\big|^2}{{|x-y|}^{1+2s-2\intgr s}}\;dx\;dy \\
& \quad +c_{1,s-\intgr s}
\int_{\Omega\setminus\Omega_N}\big|D\varphi(x)\big|^2\,\mathrm{dist}\big(x,\R\setminus(\Omega\setminus\Omega_N)\big)^{-2s+2\intgr s}\;dx
\end{align*}
which converges to~$0$ as~$N\uparrow\infty$ and~$\Omega_N\nearrow\Omega$ by for example~\cite{triebel}*{Section 4.3.2, equation (7)},
we deduce
\begin{multline*}
\lambda(\Omega) =
\lim_{N\uparrow\infty}\Big[\lambda(\Omega_N)\|u_N\|^2_{L^2(\R^n)}+2\cE_s(\varphi-u_N,u_N)+\cE_s(\varphi-u_N,\varphi-u_N)\Big] \geq \\
\geq 
\lim_{N\uparrow\infty}\bigg(\frac2{|\Omega_N|}\bigg)^{2s}\|u_N\|^2_{L^2(\R^n)}\lambda(-1,1)=\lambda(-1,1).
\end{multline*}
\end{proof}

\appendix

\section{Projections onto the positive cone}
\label{sec:proj}

Denote by
\begin{align}\label{positive cone}
\cC^+(\Omega):=\left\lbrace v\in\cH^s_0(\Omega):v\geq 0 \right\rbrace
\end{align}
and note how any~$\varphi\in C^\infty_c(\Omega)$,~$\varphi\geq 0$, lies inside~$\cC^+(\Omega).$
Define the map
\begin{align*}%\label{projection map}
\pi^+:\cH^s_0(\Omega) \longrightarrow \cC^+(\Omega) 
\end{align*}
which associates to~$w$ its projection on~$\cC^+(\Omega)$,
that is 
\begin{align}\label{projection property}
\cE_s(w-\pi^+w,w-\pi^+w)\leq\cE_s(w-v,w-v),
\quad\text{for any }\ v\in\cC^+(\Omega).
\end{align}

We list here below a number of inequalities that are needed throughout the paper:
\begin{align}
\cE_s(w-\pi^+w,v-\pi^+w) &\leq 0 & \qquad \text{ for any }\ v\in\cC^+(\Omega) \label{eq:char} \\
0 &\leq \cE_s(\pi^+w-w,v)  & \qquad \text{ for any }\ v\in\cC^+(\Omega) \label{eq:ppp} \\
\cE_s(\pi^+w-w,\pi^+w) &= 0 \label{eq:test0} \\
\cE_s(w,\pi^+w) &\leq \cE_s(w,w) \label{eq:test1} \\
\cE_s(w,v) &\leq \cE_s(\pi^+w,v) & \qquad \text{ for any }\ v\in\cC^+(\Omega) \label{eq:upward} \\
\cE_s(\pi^+w,\pi^+w) &\leq \cE_s(w,w) \label{eq:downward1} \\
\cE_s(2\pi^+w-w,2\pi^+w-w) &\leq \cE_s(w,w) \label{eq:downward2}
\end{align}

\begin{proof}[Proof of~\eqref{eq:char}]
By convexity of~$\cC^+(\Omega)$
\begin{align*}
\pi^+w+t(v-\pi^+w)\in\cC^+(\Omega),
\quad\text{for any }\ t\in[0,1],\ v\in\cC^+(\Omega).
\end{align*}
So,
\begin{multline*}
\cE_s(w-\pi^+w,w-\pi^+w)\leq\cE_s(w-\pi^+w-t(v-\pi^+w),w-\pi^+w-t(v-\pi^+w))\ =\\
=\ \cE_s(w-\pi^+w,w-\pi^+w)-2t\cE_s(w-\pi^+w,v-\pi^+w)+t^2\cE_s(v-\pi^+w,v-\pi^+w),
\end{multline*}
which implies
\[
2\cE_s(w-\pi^+w,v-\pi^+w)\leq t\cE_s(v-\pi^+w,v-\pi^+w),
\quad\text{for any }\ t\in(0,1]
\]
and we deduce~\eqref{eq:char} by considering~$t$ arbitrarily small.
\end{proof}

\begin{proof}[Proof of~\eqref{eq:ppp}]
From~\eqref{eq:char} it follows
\begin{align*}
\cE_s(\pi^+w-w,v)\geq\cE_s(\pi^+w-w,\pi^+w)
\end{align*}
where the right-hand side is nonnegative again thanks to~\eqref{eq:char} by considering the particular case~$v=0$.
\end{proof}

\begin{proof}[Proof of~\eqref{eq:test0}]
As mentioned above, inequality~$\leq$ follows from~\eqref{eq:char} by considering the particular case~$v=0$.
Inequality~$\geq$ follows from~\eqref{eq:ppp} by considering the particular case~$v=\pi^+w$.
\end{proof}

\begin{proof}[Proof of~\eqref{eq:test1}]
Using again~\eqref{eq:test0} we deduce
\begin{align*}
\cE_s(w,\pi^+w-w)=\cE_s(w-\pi^+w,\pi^+w-w)+\cE_s(\pi^+w,\pi^+w-w) \leq 0.
\end{align*}
\end{proof}

\begin{proof}[Proof of~\eqref{eq:upward}]
Since both~$v$ and~$\pi^+w$ belong to~$\cC^+(\Omega)$, then so does~$2\pi^+w+v$. Then, by~\eqref{eq:char},
\begin{multline*}
0\geq\cE_s(w-\pi^+w,(2\pi^+w+v)-\pi^+w)=\cE_s(w-\pi^+w,\pi^+w+v)=\\
=\cE_s(w-\pi^+w,\pi^+w)+\cE_s(w-\pi^+w,v)
\end{multline*}
and since, by~\eqref{eq:test0}, it holds~$\cE_s(w-\pi^+w,\pi^+w)\geq 0$,
we deduce~$\cE_s(w-\pi^+w,v)\leq 0$.
\end{proof}

\begin{proof}[Proof of~\eqref{eq:downward1}]
Write (exploiting~\eqref{eq:test0} in all inequalities below)
\begin{multline*}
\cE_s(\pi^+w,\pi^+w)-\cE_s(w,w)=\cE_s(\pi^+w-w,\pi^+w)+\cE_s(w,\pi^+w)-\cE_s(w,w)\leq\ \\
\leq\ \cE_s(w,\pi^+w-w) = \cE_s(w-\pi^+w,\pi^+w-w)+\cE_s(\pi^+w,\pi^+w-w)\leq 0.
\end{multline*}
\end{proof}

\begin{proof}[Proof of~\eqref{eq:downward2}]
Write
\begin{align*}
\cE_s(2\pi^+w-w,2\pi^+w-w)=\cE_s(w,w)+4\cE_s(\pi^+w-w,\pi^+w)\leq \cE_s(w,w)
\end{align*}
where we have used~\eqref{eq:test0}.
\end{proof}

\section*{References}
\begin{biblist}

\bib{ajs-boggio}{article}{
   author={Abatangelo, N.},
   author={Jarohs, S.},
   author={Salda\~{n}a, A.},
   title={Green function and Martin kernel for higher-order fractional
   Laplacians in balls},
   journal={Nonlinear Anal.},
   volume={175},
   date={2018},
   pages={173--190},
%   issn={0362-546X},
%   review={\MR{3830727}},
%   doi={10.1016/j.na.2018.05.019},
}

\bib{ajs-repr}{article}{
   author={Abatangelo, N.},
   author={Jarohs, S.},
   author={Salda\~{n}a, A.},
   title={Integral representation of solutions to higher-order fractional
   Dirichlet problems on balls},
   journal={Commun. Contemp. Math.},
   volume={20},
   date={2018},
   number={8},
   pages={1850002, 36},
%   issn={0219-1997},
%   review={\MR{3883404}},
%   doi={10.1142/S0219199718500025},
}

\bib{ajs-maxprinc}{article}{
   author={Abatangelo, N.},
   author={Jarohs, S.},
   author={Salda\~{n}a, A.},
   title={On the loss of maximum principles for higher-order fractional
   Laplacians},
   journal={Proc. Amer. Math. Soc.},
   volume={146},
   date={2018},
   number={11},
   pages={4823--4835},
%   issn={0002-9939},
%   review={\MR{3856149}},
%   doi={10.1090/proc/14165},
}

\bib{ajs-hypersingular}{article}{
   author={Abatangelo, N.},
   author={Jarohs, S.},
   author={Salda\~{n}a, A.},
   title={Positive powers of the Laplacian: from hypersingular integrals to
   boundary value problems},
   journal={Commun. Pure Appl. Anal.},
   volume={17},
   date={2018},
   number={3},
   pages={899--922},
%   issn={1534-0392},
%   review={\MR{3809107}},
%   doi={10.3934/cpaa.2018045},
}

\bib{ajs-ellipse}{article}{
   author={Abatangelo, N.},
   author={Jarohs, S.},
   author={Salda\~{n}a, A.},
   title={Fractional Laplacians on ellipsoids},
   journal={Math. Eng.},
   volume={3},
   date={2021},
   number={5},
   pages={Paper No. 038, 34},
%   review={\MR{4181195}},
%   doi={10.3934/mine.2021038},
}

\bib{av}{article}{
   author={Abatangelo, N.},
   author={Valdinoci, E.},
   title={Getting acquainted with the fractional Laplacian},
   conference={
      title={Contemporary research in elliptic PDEs and related topics},
   },
   book={
      series={Springer INdAM Ser.},
      volume={33},
      publisher={Springer, Cham},
   },
   date={2019},
   pages={1--105},
%   review={\MR{3967804}},
}

\bib{ashbaugh-benguria}{article}{
   author={Ashbaugh, M.S.},
   author={Benguria, R.D.},
   title={On Rayleigh's conjecture for the clamped plate and its
   generalization to three dimensions},
   journal={Duke Math. J.},
   volume={78},
   date={1995},
   number={1},
   pages={1--17},
%   issn={0012-7094},
%   review={\MR{1328749}},
%   doi={10.1215/S0012-7094-95-07801-6},
}

\bib{B94}{article}{
   author={Baernstein, A., II},
   title={A unified approach to symmetrization},
   conference={
      title={Partial differential equations of elliptic type},
      address={Cortona},
      date={1992},
   },
   book={
      series={Sympos. Math., XXXV},
      publisher={Cambridge Univ. Press, Cambridge},
   },
   date={1994},
   pages={47--91},
%   review={\MR{1297773}},
}

\bib{BM91}{article}{
   author={Bourdaud, G.},
   author={Meyer, Y.},
   title={Fonctions qui op\`erent sur les espaces de Sobolev},
   language={French},
   journal={J. Funct. Anal.},
   volume={97},
   date={1991},
   number={2},
   pages={351--360},
%   issn={0022-1236},
%   review={\MR{1111186}},
%   doi={10.1016/0022-1236(91)90006-Q},
}

\bib{bv}{book}{
   author={Bucur, C.},
   author={Valdinoci, E.},
   title={Nonlocal diffusion and applications},
   series={Lecture Notes of the Unione Matematica Italiana},
   volume={20},
   publisher={Springer, [Cham]; Unione Matematica Italiana, Bologna},
   date={2016},
   pages={xii+155},
%   isbn={978-3-319-28738-6},
%   isbn={978-3-319-28739-3},
%   review={\MR{3469920}},
%   doi={10.1007/978-3-319-28739-3},
}

\bib{hitchhiker}{article}{
   author={Di Nezza, E.},
   author={Palatucci, G.},
   author={Valdinoci, E.},
   title={Hitchhiker's guide to the fractional Sobolev spaces},
   journal={Bull. Sci. Math.},
   volume={136},
   date={2012},
   number={5},
   pages={521--573},
%   issn={0007-4497},
%   review={\MR{2944369}},
%   doi={10.1016/j.bulsci.2011.12.004},
}

\bib{dg}{article}{
   author={Dipierro, S.},
   author={Grunau, H.-C.},
   title={Boggio's formula for fractional polyharmonic Dirichlet problems},
   journal={Ann. Mat. Pura Appl. (4)},
   volume={196},
   date={2017},
   number={4},
   pages={1327--1344},
}

\bib{duffin49}{article}{
   author={Duffin, R.J.},
   title={On a question of Hadamard concerning super-biharmonic functions},
   journal={J. Math. Physics},
   volume={27},
   date={1949},
   pages={253--258},
%   review={\MR{0029021}},
}

\bib{MR144069}{article}{
   author={Duffin, R.J.},
   title={The maximum principle and biharmonic functions},
   journal={J. Math. Anal. Appl.},
   volume={3},
   date={1961},
   pages={399--405},
%   issn={0022-247X},
%   review={\MR{144069}},
%   doi={10.1016/0022-247X(61)90066-X},
}

\bib{dkk}{article}{
	author={Dyda, B.},
	author={Kuznetsov, A.},
	author={Kwa\'snicki, M.},
	title={Eigenvalues of the fractional Laplace equation in the unit ball},
	journal={J. London. Math. Soc.},
	volume={95},
	number={2},
	date={2017},
	pages={500--518},
}

\bib{garabedian}{article}{
   author={Garabedian, P.R.},
   title={A partial differential equation arising in conformal mapping},
   journal={Pacific J. Math.},
   volume={1},
   date={1951},
   pages={485--524},
%   issn={0030-8730},
%   review={\MR{46440}},
}

\bib{garofalo}{article}{
author={Garofalo, N.},
title={Fractional thoughts},
conference={
title={New developments in the analysis of nonlocal operators},
},
book={
series={Contemp. Math.},
volume={723},
publisher={Amer. Math. Soc., [Providence], RI},
},
date={[2019] \copyright 2019},
pages={1--135},
}

\bib{ggs}{book}{
   author={Gazzola, F.},
   author={Grunau, H.-C.},
   author={Sweers, G.},
   title={Polyharmonic boundary value problems},
   series={Lecture Notes in Mathematics},
   volume={1991},
%   note={Positivity preserving and nonlinear higher order elliptic equations
%   in bounded domains},
   publisher={Springer-Verlag, Berlin},
   date={2010},
%   pages={xviii+423},
%   isbn={978-3-642-12244-6},
%   review={\MR{2667016}},
%   doi={10.1007/978-3-642-12245-3},
}

\bib{GJ19}{article}{
	author={Greco, A.},
	author={Jarohs, S.},
	title={Foliated Schwarz symmetry of solutions to a cooperative system of equations involving nonlocal operators},
   journal={J. Elliptic Parabol. Equ},
   volume={8},
   date={2022},
  % number={3},
   pages={383--417},
%   issn={1021-9722},
%   review={\MR{3493130}},
%   doi={10.1007/s00030-016-0386-x},
}

\bib{G11}{book}{
   author={Grisvard, P.},
   title={Elliptic problems in nonsmooth domains},
   series={Classics in Applied Mathematics},
   volume={69},
%   note={Reprint of the 1985 original [ MR0775683];
%   With a foreword by Susanne C. Brenner},
   publisher={Society for Industrial and Applied Mathematics (SIAM),
   Philadelphia, PA},
   date={2011},
   pages={xx+410},
%   isbn={978-1-611972-02-3},
%   review={\MR{3396210}},
%   doi={10.1137/1.9781611972030.ch1},
}

\bib{hada1908}{article}{
	author={Hadamard, J.},
	title={Mémoire sur le problème d’analyse relatif à l’équilibre des plaques élastiques
encastrées},
	note={In: Œuvres de Jacques Hadamard, Tome II, pages 515–641. CNRS Paris, 1968.
Reprint of: Mémoires présentés par divers savants a l’Académie des Sciences {\bf 2} (1908), no. 33, pp. 1--128.},
	}

\bib{J16}{article}{
   author={Jarohs, S.},
   title={Symmetry of solutions to nonlocal nonlinear boundary value
   problems in radial sets},
   journal={NoDEA Nonlinear Differential Equations Appl.},
   volume={23},
   date={2016},
   number={3},
   pages={Art. 32, 22},
%   issn={1021-9722},
%   review={\MR{3493130}},
%   doi={10.1007/s00030-016-0386-x},
}

\bib{JW16}{article}{
   author={Jarohs, S.},
   author={Weth, T.},
   title={Symmetry via antisymmetric maximum principles in nonlocal problems
   of variable order},
   journal={Ann. Mat. Pura Appl. (4)},
   volume={195},
   date={2016},
   number={1},
   pages={273--291},
%   issn={0373-3114},
%   review={\MR{3453602}},
%   doi={10.1007/s10231-014-0462-y},
}
	
\bib{kkm}{article}{
   author={Kozlov, V.A.},
   author={Kondrat\cprime ev, V.A.},
   author={Maz\cprime ya, V.G.},
   title={On sign variability and the absence of ``strong'' zeros of
   solutions of elliptic equations},
   language={Russian},
   journal={Izv. Akad. Nauk SSSR Ser. Mat.},
   volume={53},
   date={1989},
   number={2},
   pages={328--344},
%   issn={0373-2436},
   translation={
      journal={Math. USSR-Izv.},
      volume={34},
      date={1990},
      number={2},
      pages={337--353},
%      issn={0025-5726},
   },
%   review={\MR{998299}},
%   doi={10.1070/IM1990v034n02ABEH000649},
}

\bib{MR2376460}{article}{
   author={Lopes, O.},
   author={Mari\c{s}, M.},
   title={Symmetry of minimizers for some nonlocal variational problems},
   journal={J. Funct. Anal.},
   volume={254},
   date={2008},
   number={2},
   pages={535--592},
%   issn={0022-1236},
%   review={\MR{2376460}},
%   doi={10.1016/j.jfa.2007.10.004},
}

\bib{musina-nazarov}{article}{
   author={Musina, R.},
   author={Nazarov, A.I.},
   title={A note on truncations in fractional Sobolev spaces},
   journal={Bull. Math. Sci.},
   volume={9},
   date={2019},
   number={1},
   pages={1950001, 7},
%   issn={1664-3607},
%   review={\MR{3949429}},
%   doi={10.1142/S1664360719500012},
}

\bib{nadirashvili}{article}{
   author={Nadirashvili, N.S.},
   title={Rayleigh's conjecture on the principal frequency of the clamped
   plate},
   journal={Arch. Rational Mech. Anal.},
   volume={129},
   date={1995},
   number={1},
   pages={1--10},
%   issn={0003-9527},
%   review={\MR{1328469}},
%   doi={10.1007/BF00375124},
}

\bib{MR481048}{article}{
   author={Nakai, M.},
   author={Sario, L.},
   title={Green's function of the clamped punctured disk},
   journal={J. Austral. Math. Soc. Ser. B},
   volume={20},
   date={1977},
   number={2},
   pages={175--181},
%   issn={0334-2700},
%   review={\MR{481048}},
%   doi={10.1017/S0334270000001557},
}

\bib{park}{article}{
   author={Park, Y.J.},
   title={Logarithmic Sobolev trace inequality},
   journal={Proc. Amer. Math. Soc.},
   volume={132},
   date={2004},
   number={7},
   pages={2075--2083},
%   issn={0002-9939},
%   review={\MR{2053980}},
%   doi={10.1090/S0002-9939-03-07329-5},
}

\bib{MR3294242}{article}{
   author={Ros-Oton, X.},
   author={Serra, J.},
   title={Local integration by parts and Pohozaev identities for higher
   order fractional Laplacians},
   journal={Discrete Contin. Dyn. Syst.},
   volume={35},
   date={2015},
   number={5},
   pages={2131--2150},
%   issn={1078-0947},
%   review={\MR{3294242}},
%   doi={10.3934/dcds.2015.35.2131},
}

\bib{talenti}{article}{
   author={Talenti, G.},
   title={Elliptic equations and rearrangements},
   journal={Ann. Scuola Norm. Sup. Pisa Cl. Sci. (4)},
   volume={3},
   date={1976},
   number={4},
   pages={697--718},
%   issn={0391-173X},
%   review={\MR{601601}},
}

\bib{samko}{book}{
   author={Samko, S.G.},
   author={Kilbas, A.A.},
   author={Marichev, O.I.},
   title={Fractional integrals and derivatives},
   note={Theory and applications;
   Edited and with a foreword by S. M. Nikol\cprime ski\u{\i};
   Translated from the 1987 Russian original;
   Revised by the authors},
   publisher={Gordon and Breach Science Publishers, Yverdon},
   date={1993},
   pages={xxxvi+976},
%   isbn={2-88124-864-0},
%   review={\MR{1347689}},
}

\bib{MR325989}{article}{
   author={Seif, J.B.},
   title={On the Green's function for the biharmonic equation in an infinite
   wedge},
   journal={Trans. Amer. Math. Soc.},
   volume={182},
   date={1973},
   pages={241--260},
%   issn={0002-9947},
%   review={\MR{325989}},
%   doi={10.2307/1996533},
}

\bib{MR1267051}{article}{
   author={Shapiro, H.S.},
   author={Tegmark, M.},
   title={An elementary proof that the biharmonic Green function of an
   eccentric ellipse changes sign},
   journal={SIAM Rev.},
   volume={36},
   date={1994},
   number={1},
   pages={99--101},
%   issn={0036-1445},
%   review={\MR{1267051}},
%   doi={10.1137/1036005},
}

\bib{sweers-triharm}{article}{
   author={Sweers, G.},
   title={An elementary proof that the triharmonic Green function of an
   eccentric ellipse changes sign},
   journal={Arch. Math. (Basel)},
   volume={107},
   date={2016},
   number={1},
   pages={59--62},
%   issn={0003-889X},
%   review={\MR{3514727}},
%   doi={10.1007/s00013-016-0909-z},
}

\bib{sweers-correction}{article}{
   author={Sweers, G.},
   title={Correction to: An elementary proof that the triharmonic Green
   function of an eccentric ellipse changes sign},
   journal={Arch. Math. (Basel)},
   volume={112},
   date={2019},
   number={2},
   pages={223--224},
%   issn={0003-889X},
%   review={\MR{3908840}},
%   doi={10.1007/s00013-018-1274-x},
}

\bib{talenti-principle}{article}{
   author={Talenti, G.},
   title={Elliptic equations and rearrangements},
   journal={Ann. Scuola Norm. Sup. Pisa Cl. Sci. (4)},
   volume={3},
   date={1976},
   number={4},
   pages={697--718},
%   issn={0391-173X},
%   review={\MR{601601}},
}

\bib{talenti1}{article}{
   author={Talenti, G.},
   title={On the first eigenvalue of the clamped plate},
   journal={Ann. Mat. Pura Appl. (4)},
   volume={129},
   date={1981},
   pages={265--280 (1982)},
%   issn={0003-4622},
%   review={\MR{648335}},
%   doi={10.1007/BF01762146},
}

\bib{triebel}{book}{
   author={Triebel, H.},
   title={Interpolation theory, function spaces, differential operators},
   series={North-Holland Mathematical Library},
   volume={18},
   publisher={North-Holland Publishing Co., Amsterdam-New York},
   date={1978},
   pages={528},
%   isbn={0-7204-0710-9},
%   review={\MR{503903}},
}

\bib{vS}{article}{
   author={Van Schaftingen, J.},
   title={Universal approximation of symmetrizations by polarizations},
   journal={Proc. Amer. Math. Soc.},
   volume={134},
   date={2006},
   number={1},
   pages={177--186},
%   issn={0002-9939},
%   review={\MR{2170557}},
%   doi={10.1090/S0002-9939-05-08325-5},
}

\bib{vSW04}{article}{
   author={Van Schaftingen, J.},
   author={Willem, M.},
   title={Set transformations, symmetrizations and isoperimetric
   inequalities},
   conference={
      title={Nonlinear analysis and applications to physical sciences},
   },
   book={
      publisher={Springer Italia, Milan},
   },
   date={2004},
   pages={135--152},
%   review={\MR{2085832}},
}

\bib{W10}{article}{
   author={Weth, T.},
   title={Symmetry of solutions to variational problems for nonlinear
   elliptic equations via reflection methods},
   journal={Jahresber. Dtsch. Math.-Ver.},
   volume={112},
   date={2010},
   number={3},
   pages={119--158},
%   issn={0012-0456},
%   review={\MR{2722502}},
%   doi={10.1365/s13291-010-0005-4},
}

\end{biblist}

\end{document}